\newtheorem{theorem}{Theorem}
\newtheorem{lemma}[theorem]{Lemma}
\newtheorem{corollary}[theorem]{Corollary}
\begin{document}

\title{Extremal results on average subtree density of series-reduced trees}
\author{John Haslegrave
\thanks{Trinity College, Cambridge CB2 1TQ}}

\maketitle

\begin{abstract}
Vince and Wang \cite{VW10} showed that the average subtree density of a series-reduced tree is between
$\frac{1}{2}$ and $\frac{3}{4}$, answering a conjecture of Jamison \cite{Jam83}. They ask under what 
conditions a sequence of such trees may have average subtree density tending to either bound; we answer 
these questions by giving simple necessary and sufficient conditions in each case.
\end{abstract}

\section{Introduction}
In this paper we consider the average order over all subtrees of a fixed tree $T$. We call this quantity 
$\mu(T)$, and the average subtree density $D(T)=\frac{\mu(T)}{n(T)}$ where $n(T)$ is the number of 
vertices of $T$. Equivalently, we may think of $\mu(T)$ as the sum over vertices of $T$ of the proportion 
of subtrees which contain that vertex, and $D(T)$ as the average over vertices of $T$ of the proportion 
of subtrees which contain that vertex; we shall frequently use these alternative definitions.

These invariants were introduced by Jamison \cite{Jam83}, who studied the extremal problem, showing 
that the tree of order $n$ which minimises the average order of a subtree is the path $P_n$, for 
which $\mu(P_n)=\frac{n+2}{3}$, but that there exist trees with $D(T)$ arbitrarily close to 1. Meir and 
Moon \cite{MM83} gave asymptotic results on the average value of $D(T)$ over all trees of order $n$.

We shall focus our attention on trees whose internal vertices all have degree at least three; we write 
$\mathcal{T}_3$ for the set of trees with at least one internal vertex and all internal vertices having 
degree at least three (so we exclude the one- and two-vertex trees which would otherwise vacuously satisfy 
the condition). Such trees are sometimes referred to as series-reduced trees or homeomorphically irreducible
trees \cite{BLL}, and have been studied in other contexts (e.g. \cite{HP59}). (As usual, we refer to 
vertices of degree at least 2 as ``internal vertices'' of a tree, and other vertices as ``leaves''.)

For trees $T \in \mathcal{T}_3$, Jamison \cite{Jam83} conjectured that $D(T)\ge\frac{1}{2}$; this, along 
with the upper bound $D(T)<\frac{3}{4}$, was proved by Vince and Wang \cite{VW10}, who also asked 
questions about under what conditions a sequence of distinct trees in $\mathcal{T}_3$ can have density 
tending to either limit. We shall give a simpler proof of their upper bound, as well as answering their
questions by giving exact necessary and sufficient conditions for a sequence of trees in $\mathcal{T}_3$
to have density tending to $\frac{1}{2}$ or $\frac{3}{4}$.

\section{The upper bound}
For a given tree $T$, the subgraphs of $T$ which are also trees must all be induced subgraphs. Write 
$\mathcal{S}(T)$ for the set of subsets of $V(T)$ which induce a subgraph which is a tree. We require 
each subtree to have at least one vertex, so that $\varnothing\notin \mathcal{S}(T)$ but 
$\{v\}\in\mathcal{S}(T)$ for every $v\in V$.

First we make a simple observation which immediately gives us the upper bound $D(T)<\frac{3}{4}$ for 
$T \in \mathcal{T}_3$.

\begin{lemma}\label{stleaf}
If $v$ is a leaf of a tree $T$ with at least four vertices, less than half of the subtrees of $T$ contain $v$.
\end{lemma}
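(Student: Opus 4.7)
The plan is to construct an injection from the subtrees containing $v$ to the subtrees not containing $v$ and show it fails to be surjective by at least $2$, which gives the strict inequality.

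Let $u$ be the unique neighbour of $v$ in $T$. Write $\mathcal{A}$ for the set of subtrees of $T$ containing $v$ and $\mathcal{B}$ for the set of subtrees not containing $v$. Define
\[
\phi \colon \mathcal{A}\setminus\{\{v\}\}\to\mathcal{B},\qquad \phi(S)=S\setminus\{v\}.
\]
First I would verify this is well-defined and injective: since $v$ is a leaf of $T$ and hence of any subtree containing it, removing $v$ from a subtree $S$ with $|S|\ge 2$ leaves a non-empty induced subtree. Injectivity is immediate since $\phi(S)\cup\{v\}=S$ recovers $S$.

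Next I would identify the image of $\phi$. A subtree $S'\in\mathcal{B}$ lies in the image iff $S'\cup\{v\}$ is again a subtree of $T$, and since $v$'s only neighbour in $T$ is $u$, this happens iff $u\in S'$. Hence the elements of $\mathcal{B}$ missed by $\phi$ are exactly the subtrees of $T$ avoiding both $v$ and $u$, i.e.\ the non-empty subtrees contained in $T\setminus\{v,u\}$.

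The key remaining step is to show there are at least two such missed subtrees. Since $T$ has at least four vertices, $T\setminus\{v,u\}$ has at least two vertices, each of which is a singleton subtree avoiding $\{v,u\}$. Therefore $|\mathcal{B}|\ge|\phi(\mathcal{A}\setminus\{\{v\}\})|+2=|\mathcal{A}|-1+2=|\mathcal{A}|+1$, so $|\mathcal{A}|<|\mathcal{B}|$ and strictly less than half of the subtrees contain $v$. I do not foresee a significant obstacle here; the only mild care needed is to confirm that the ``at least two missed subtrees'' step works uniformly regardless of the degree of $u$, which it does, since singletons always suffice.
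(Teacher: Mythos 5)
Your proof is correct and follows essentially the same route as the paper: a bijection (via removing $v$) between subtrees containing $v$ other than $\{v\}$ and subtrees containing $u$ but not $v$, with the strict inequality coming from the at least two singleton subtrees avoiding both $u$ and $v$, which exist since $T$ has at least four vertices.
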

\begin{proof}
Write $u$ for the neighbour of $v$. For every $S\in\mathcal{S}(T)$ with $v\in S$ and $S\neq \{v\}$ 
write $S'=S-v$; then $S'\in \mathcal{S}(T)$, and $u\in S'$. Conversely, if $u\in R$, $v\notin R$ 
and $R\in \mathcal{S}(T)$ then $R=S'$ for some $S\in \mathcal{S}(T)$ containing $v$.
Thus there is a one-to-one correspondence between subtrees containing $v$ 
(other than singleton $v$) and subtrees containing $u$ but not $v$. 
Since there are at least two subtrees which contain neither $u$ nor $v$ (namely any other single vertex), 
there are more subtrees which do not contain $v$ than subtrees which do.
\end{proof}
Any tree (other than the single-vertex tree) has at least two leaves; if $T \in \mathcal{T}_3$ then we 
can say more, giving us our desired result.
\begin{lemma}
If $T \in \mathcal{T}_3$ has $n$ vertices then $\mu(T)<\frac{3n-2}{4}$.
\end{lemma}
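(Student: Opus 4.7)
My proof plan would proceed by combining Lemma \ref{stleaf} with a simple counting of leaves forced by the minimum-degree condition on internal vertices.

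First I would recall the vertex-based interpretation of $\mu(T)$ stated in the introduction: $\mu(T)=\sum_{v\in V(T)}p(v)$, where $p(v)$ is the proportion of subtrees of $T$ containing $v$. Let $L$ be the number of leaves and $I$ the number of internal vertices, so $L+I=n$. Applying Lemma \ref{stleaf} to each leaf (noting $n\ge 4$ automatically, since the smallest tree in $\mathcal{T}_3$ is $K_{1,3}$) gives $p(v)<\tfrac{1}{2}$ for every leaf $v$, while trivially $p(v)\le 1$ for every internal vertex. Summing,
\[
\mu(T) \;<\; \tfrac{L}{2} + I \;=\; n - \tfrac{L}{2}.
\]

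The remaining step is to show $L \ge (n+2)/2$, from which $n - L/2 \le (3n-2)/4$ and we are done. This is a standard degree-sum argument: since each internal vertex has degree $\ge 3$ and each leaf has degree $1$,
\[
L + 3I \;\le\; \sum_{v}\deg(v) \;=\; 2(n-1) \;=\; 2L+2I-2,
\]
so $I \le L-2$ and therefore $L \ge (n+2)/2$. Combining the two inequalities yields $\mu(T) < (3n-2)/4$ as required.

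The main potential obstacle is just making sure the strict inequality survives: the leaf estimate from Lemma \ref{stleaf} is strict and every tree in $\mathcal{T}_3$ has at least one leaf (in fact at least three), so the summed bound $\mu(T)<L/2+I$ is strict; the subsequent inequality $n-L/2\le (3n-2)/4$ need only be weak. No further calculation is needed.
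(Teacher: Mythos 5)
Your proposal is correct and is essentially the same argument as the paper's: apply Lemma~\ref{stleaf} to each leaf, bound internal vertices trivially by $1$, and use the degree-sum inequality to get $l\ge\frac{n+2}{2}$, yielding $\mu(T)<\frac{l}{2}+(n-l)\le\frac{3n-2}{4}$. Your care about where the strict inequality comes from matches the paper's reasoning as well.
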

\begin{proof}
Since $T$ has $n-1$ edges, $\sum_v d(v)=2n-2$. If $T$ has $l$ leaves then each of the
$n-l$ internal vertices has degree at least 3, so $2n-2\ge l + 3(n-l)$, i.e. $l \ge \frac{n+2}{2}$. 
Recall that $\mu(T)$ is the sum over vertices of the proportion of subtrees containing that vertex. 
By definition, $T$ has at least four vertices, so we may apply Lemma~\ref{stleaf} to get
$\mu(T)<\frac{l}{2}+(n-l)\le \frac{3n-2}{4}$.
\end{proof}

We shall frequently recall the observation above that if a tree in $\mathcal{T}_3$ has $n$ vertices and 
$l$ leaves then $l \ge \frac{n+2}{2}$.

Vince and Wang \cite{VW10} ask under what conditions a sequence of distinct trees in $\mathcal{T}_3$ can 
have densities tending to $\frac{1}{2}$. They consider the conditions of bounded diameter and of unbounded 
degree, observing that neither is sufficient. In fact the bounded diameter condition (which would imply 
unbounded degree) is not necessary either. Using Lemma \ref{stleaf} we can see that a sufficient condition
is that the proportion of leaves tends to 1, since if $T$ has $n$ vertices and at least $(1-\varepsilon)n$ 
leaves then $\mu(T)<\varepsilon n+\frac{1}{2}(1-\varepsilon)n$ and so $D(T)<\frac{1}{2}(1+\varepsilon)$. 
We can certainly construct sequences of trees for which the diameter is unbounded but the proportion of leaves 
tends to 1, such as the tree formed by connecting $n$ stars of order $n$ by a path. We might hope that 
this condition on the proportion of leaves is also necessary; in the next section we shall show that this is so.

Vince and Wang \cite{VW10} also give a sequence of trees in $\mathcal{T}_3$ with density tending to 
$\frac{3}{4}$ and go on to ask about necessary and sufficient conditions for a sequence of 
distinct trees in $\mathcal{T}_3$ to have density tending to $\frac{3}{4}$. They suggest that suitable 
conditions might be that both the proportion of leaves tends to $\frac{1}{2}$ (the minimum possible limit for 
trees in $\mathcal{T}_3$) and the ratio of the diameter to the number of vertices tends to $\frac{1}{2}$ (the 
maximum possible limit for trees in $\mathcal{T}_3$). From Lemma~\ref{stleaf} it is clear that the first 
condition is necessary. The second is not, however, as we shall see.

\section{Twigs and improved upper bounds}
Define a vertex $v$ of a tree to be a \textit{twig} if $d(v)\ge 2$ but at least $d(v)-1$ of its neighbours 
are leaves. An equivalent definition is that the twigs of $T$ are the leaves of the tree $T'$ formed by 
deleting all the leaves of $T$. In this section we aim to show that trees for which $D(T)$ is close to 
$\frac{3}{4}$ must have few twigs.

We have already defined $\mathcal{S}(T)$ for a given tree $T$. Let $\mathcal{S}'(T)$ be obtained from 
$\mathcal{S}(T)$ by adding $\varnothing$ but removing $\{v\}$ for every leaf $v$ (other singleton sets 
remain). $\mu(T)$ is the average order of a subset of $V(T)$ in $\mathcal{S}(T)$. 
Here we shall find it more convenient to consider $\mu'(T)$, the average order of a subset in 
$\mathcal{S}'(T)$, and so we wish to establish an inequality between the two.
\begin{lemma}
For any tree $T$ with at least four vertices, $\mu(T) \le \mu'(T)$, with equality only for the path on 
four vertices, $P_4$.
\end{lemma}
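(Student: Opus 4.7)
The plan is to reduce the inequality $\mu(T) \le \mu'(T)$ to a lower bound on $\mu(T)$ alone, and then finish using Jamison's theorem. Let $l$ denote the number of leaves of $T$, write $N = |\mathcal{S}(T)|$, and set $\Sigma = \sum_{S \in \mathcal{S}(T)} |S|$, so that $\mu(T) = \Sigma/N$. Since $\mathcal{S}'(T)$ is obtained from $\mathcal{S}(T)$ by removing the $l$ leaf singletons (each of size $1$) and inserting $\varnothing$ (of size $0$), we have $|\mathcal{S}'(T)| = N + 1 - l$ and $\sum_{S \in \mathcal{S}'(T)} |S| = \Sigma - l$. A direct computation then gives
$$\mu'(T) - \mu(T) = \frac{(l-1)\Sigma - lN}{N(N+1-l)}.$$

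Since any tree with $n \ge 2$ vertices has at least $n+1$ subtrees while $l \le n$, the denominator is positive, and so $\mu(T) \le \mu'(T)$ is equivalent to the single inequality $\mu(T) \ge l/(l-1)$. This is where I would appeal to Jamison's result that $\mu(T) \ge \mu(P_n) = (n+2)/3$ for every tree on $n$ vertices: combined with $n \ge 4$ this yields $\mu(T) \ge 2$, while $l \ge 2$ forces $l/(l-1) \le 2$. In particular, whenever $l \ge 3$ we have $l/(l-1) \le 3/2 < 2 \le \mu(T)$, so the inequality is strict.

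In the remaining case $l = 2$ the tree is forced to be a path $P_n$, and then $\mu(T) = (n+2)/3$ and $l/(l-1) = 2$, so the inequality becomes $(n+2)/3 \ge 2$, strict unless $n = 4$. This gives the unique extremal tree $P_4$. I do not anticipate any real obstacle: the two points requiring care are verifying that $N + 1 - l > 0$ so that the algebraic rearrangement is valid, and handling the equality case separately for each value of $l$, both of which are routine.
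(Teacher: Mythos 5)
Your proof is correct, and it reaches the conclusion by a route that differs from the paper's in a meaningful way. Both arguments ultimately turn on comparing an average subtree order with the threshold $\frac{l}{l-1}\le 2$: your exact computation $\mu'(T)-\mu(T)=\frac{(l-1)\Sigma-lN}{N(N+1-l)}$ (with the correct check that $N+1-l>0$) reduces the lemma to $\mu(T)\ge \frac{l}{l-1}$, whereas the paper writes $\mu(T)$ as a convex combination of $\mu'(T)$ and $\frac{l}{l-1}$ and reduces it to $\mu'(T)\ge 2$. The difference lies in how the bound of $2$ is obtained: the paper proves $\mu'(T)\ge 2$ self-containedly, by listing the cheap members of $\mathcal{S}(T)\cap\mathcal{S}'(T)$ (the $n-l$ internal singletons, the whole tree, and the $l$ leaf-deleted subtrees of order $n-1$) and bounding everything else below by $2$, which also hands over the equality case $n=4$, $l=2$ directly; you instead invoke Jamison's extremal theorem $\mu(T)\ge\mu(P_n)=\frac{n+2}{3}$, together with the exact path formula to settle equality when $l=2$ (a tree with two leaves being a path, forced to be $P_4$). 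Your version is shorter and the case analysis on $l$ is clean, but it imports a nontrivial external result where only the much weaker fact $\mu(T)\ge 2$ for $n\ge 4$ is needed, and that weaker fact admits the same elementary counting proof the paper uses; since the paper does state Jamison's theorem and the value $\mu(P_n)=\frac{n+2}{3}$ in its introduction, the citation is legitimate and your argument stands as written.
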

\begin{proof}
Suppose that $T$ has $n$ vertices and $l$ leaves. Consider the subsets of $V(T)$ in 
$\mathcal{S}(T) \cap \mathcal{S}'(T)$, that is to say the nonempty subsets which induce subtrees 
which are not single leaves of $T$. Let the number of such subsets be $a$ and the total of their orders be $A$, 
then $\mu(T)=\frac{A+l}{a+l}$, $\mu'(T)=\frac{A}{a+1}$. Firstly, we claim that $\mu'(T)\ge 2$: 
since the $a$ subtrees in $\mathcal{S}(T) \cap \mathcal{S}'(T)$ comprise $n-l$ of order 1, one of order $n$, 
$l$ of order $n-1$ obtained by removing a leaf from $T$, and $a-n-1$ others, each of order at least 2,
\begin{eqnarray*}
A&\ge& 2(a-n-1)+(n-l)+n+l(n-1) \\
&=& 2a +(n-2)l - 2 \\
&\ge& 2a + 2.
\end{eqnarray*}
The last line follows since $n\ge 4$ and $l\ge 2$; equality therefore occurs only when $n=4$ and $l=2$, i.e. $T=P_4$.

Now, since $l\ge 2$, $\frac{l}{l-1}\le 2\le \mu'(T)$ (with equality only if $T=P_4$). So
\begin{eqnarray*}
\mu(T)=\frac{A+l}{a+l}&=&\frac{a+1}{a+l}\frac{A}{a+1}+\frac{l-1}{l+a}\frac{l}{l-1} \\
&\le& \frac{a+1}{a+l}\mu'(T)+\frac{l-1}{l+a}\mu'(T) \\
&=&\mu'(T),
\end{eqnarray*}
as required, again with equality only if $T=P_4$.
\end{proof}
\begin{lemma}\label{twigub}
If $T\in \mathcal{T}_3$ has $n\ge 4$ vertices and $t$ twigs then $\mu(T) < \frac{3n}{4}-\frac{2t}{5}$.
\end{lemma}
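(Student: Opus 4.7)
The plan is to exploit the local structure around each twig by bounding the joint contribution of the twig and its leaf-neighbours to $\mu(T)$. For a twig $u$ with $k \ge 2$ leaf-neighbours $v_1, \ldots, v_k$, let $T^* = T - \{v_1, \ldots, v_k\}$, and write $B$ and $C$ for the number of subtrees of $T^*$ containing and not containing $u$ respectively. Any subtree of $T$ is either (i) a singleton $\{v_i\}$ (giving $k$ subtrees), (ii) a subtree of $T^* - u$ (giving $C$), or (iii) a set $S \cup L$ where $S$ is a subtree of $T^*$ containing $u$ and $L \subseteq \{v_1, \ldots, v_k\}$ is arbitrary (giving $2^k B$). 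Thus $T$ has $N := 2^k B + C + k$ subtrees, of which $2^k B$ contain $u$ and $1 + 2^{k-1} B$ contain any given $v_i$.

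Writing $c(v)$ for the proportion of subtrees of $T$ containing $v$, the cluster contribution $c(u) + \sum_{i=1}^{k} c(v_i)$ equals $(2^{k-1} B(k+2) + k)/N$. The key step is to show the cluster saving $\frac{3(k+1)}{4} - c(u) - \sum_i c(v_i) \ge \frac{2}{5}$, which after clearing denominators becomes
\[
2^k B (5k - 13) + (15k+7) C + k(15k - 13) \ge 0.
\]
For $k \ge 3$ the coefficient $5k - 13 \ge 2$ and the other terms are non-negative, so this is immediate. For $k = 2$ the inequality reduces to $12B \le 37C + 34$: the hypothesis $T \in \mathcal{T}_3$ with $k = 2$ forces $u$ to have degree $3$ in $T$ with a unique non-leaf neighbour $w$ satisfying $d_{T^*}(w) = d_T(w) \ge 3$, so $|V(T^*)| \ge 4$, and applying Lemma~\ref{stleaf} to $T^*$ (where $u$ is now a leaf) yields $B < C$, which comfortably implies the bound. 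The degenerate case $T = K_{1, n-1}$ (a star, with a single twig) is handled by direct computation.

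Summing the cluster savings over all $t$ twigs yields a contribution of at least $\frac{2t}{5}$ to $\frac{3n}{4} - \mu(T)$. To close the argument, a non-twig leaf $v$ satisfies $c(v) < \frac{1}{2}$ by Lemma~\ref{stleaf}, contributing strict saving above $\frac{1}{4}$, while a non-twig internal vertex $w$ satisfies $c(w) \le 1$ and so costs at most $\frac{1}{4}$; combining with $l \ge \frac{n+2}{2}$ should give the desired strict inequality. The main obstacle will be trees containing non-twig internal vertices with no leaf-neighbours at all, where the potential loss is not directly offset by extra leaf savings; I expect the fix is to sharpen the cluster saving above $\frac{2}{5}$, as the same calculation in fact gives cluster savings strictly exceeding $\frac{13}{20}$ in the non-star case (using $B \le C - 1$ from Lemma~\ref{stleaf} applied to $T^*$), providing enough slack to finish.
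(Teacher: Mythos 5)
Your local (cluster) analysis is correct and in fact cleaner than the paper's: you compute the contribution of a twig and its leaf-neighbours exactly inside $T$ (the paper instead passes to a modified family $\mathcal{S}'(T)$, proves $\mu(T)\le\mu'(T)$, and uses an auxiliary polynomial inequality), and your verifications for $k\ge 3$, for $k=2$ via Lemma~\ref{stleaf} applied to $T^*$, and the sharpened per-cluster saving exceeding $\frac{13}{20}$ all check out. The problem is the global bookkeeping, which you yourself flag as "the main obstacle" but do not resolve. Summing your savings gives
\begin{equation*}
\frac{3n}{4}-\mu(T) \;>\; \frac{13t}{20}+\frac{l_2}{4}-\frac{n-t-l_1-l_2}{4},
\end{equation*}
where $l_1$ is the number of leaves adjacent to twigs and $l_2$ the number of other leaves, and for this to be at least $\frac{2t}{5}$ you need $2l_2+l_1+2t\ge n$, i.e.\ that the non-twig internal vertices number at most $l_2+t$. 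This does \emph{not} follow from $l\ge\frac{n+2}{2}$ together with $l_1\ge 2t$: the values $n=100$, $l_1=60$, $l_2=0$, $t=10$ satisfy both of those constraints while violating $2l_2+l_1+2t\ge n$, so no amount of manipulation of the facts you cite can yield it, and sharpening the cluster saving to $\frac{13}{20}$ only reduces the requirement to exactly this inequality rather than discharging it. So the claim that the sharpened saving "provides enough slack to finish" is unjustified as stated.

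The missing ingredient is a structural counting step, which is precisely what the paper supplies: delete the $l_1$ leaves adjacent to twigs to obtain a tree $T'$ with $n-l_1$ vertices whose leaves are exactly the $t$ former twigs together with the $l_2$ remaining leaves, and whose other internal vertices keep their degrees, so $T'\in\mathcal{T}_3$ (or is the one- or two-vertex tree, handled separately). Applying the bound $\text{leaves}\ge\frac{\text{vertices}+2}{2}$ to $T'$ gives $l_2\ge\frac{n-l_1-2t+1}{2}$, hence $2l_2+l_1+2t\ge n+1$, and with that your accounting (using the $\frac{13}{20}$ saving per cluster, whose extra $\frac{1}{4}t$ of slack is genuinely needed here) does close, with the star case done directly as you propose. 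So your route is salvageable and genuinely different from the paper's in its local analysis, but as written it has a real gap at the step where leaf savings must offset the non-twig internal vertices.
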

\begin{proof}
We shall instead show that $\mu'(T)<\frac{3n}{4}-\frac{2t}{5}$; by the previous lemma, this is sufficient.
$\mu'(T)$ is the average number of vertices belonging to a subset in $\mathcal{S}'(T)$; equivalently, it is 
the sum over all vertices of the proportion of subsets in $\mathcal{S}'(T)$ which contain that vertex.

If $w$ is any leaf not adjacent to a twig then removing $w$ from any set in $\mathcal{S}'(T)$ containing 
it gives another set in $\mathcal{S}'(T)$, and all sets obtained in this way are distinct, so $w$ is 
in at most half of the sets in $\mathcal{S}'(T)$. Certainly any vertex which is neither a twig nor a 
leaf is in at most all sets in $\mathcal{S}'(T)$.

Suppose $T$ has $l_1$ leaves which are adjacent to twigs and $l_2$ leaves not adjacent to twigs. 
Consider the tree $T'$ obtained from $T$ by removing the $l_1$ leaves adjacent to twigs of $T$. 
Each vertex which was a twig in $T$ is now a leaf in $T'$ and any other internal vertex of $T$ has the same 
degree in $T'$, so either $T'\in\mathcal{T}_3$ or $T'$ is the one-vertex tree or $T'$ is the two-vertex tree.
Recall that any tree in $\mathcal{T}_3$ with $k$ vertices has at least $\frac{k+2}{2}$ leaves. 
$T'$ has $n-l_1$ vertices and $l_2+t$ leaves, so if $T'\in\mathcal{T}_3$ then $l_2+t\ge\frac{n-l_1+2}{2}$, 
i.e. $l_2\ge\frac{n-l_1-2t+2}{2}$. If $T'$ is the one-vertex tree then $l_1=n-1$, $t=1$ and $l_2=0$, 
so $l_2=\frac{n-l_1-2t+1}{2}$. If $T'$ is the two-vertex tree then $l_1=n-2$, $t=2$ and $l_2=0$, 
so $l_2=\frac{n-l_1-2t+2}{2}$. In each case $l_2\ge\frac{n-l_1-2t+1}{2}$.

For any twig, $v$, with $a$ adjacent leaves, we define an equivalence relation on $\mathcal{S}'(T)$ by 
saying two sets are equivalent if they differ only on $v$ and adjacent leaves. Since any set in 
$\mathcal{S}'(T)$ which contains a leaf adjacent to $v$ must also contain $v$, each equivalence class 
consists either of a single set which does not contain $v$, or of $2^a+1$ sets, one not containing $v$ and 
the remainder containing $v$ and any subset of adjacent leaves. Thus the proportion of sets in 
$\mathcal{S}'(T)$ which contain $v$ is at most $\frac{2^a}{2^a+1}$, and the proportion which contain
any given leaf adjacent to $v$ is at most $\frac{2^{a-1}}{2^a+1}$. Note that, since $d(v)\ge 3$, $a\ge 2$.

Let the twigs of $T$ be $v_1, v_2, \ldots, v_t$, with $a_1, a_2, \ldots, a_t$ leaves respectively, so that 
$l_1=\sum a_i\ge 2t$. The sum, as $u$ ranges over all twigs and leaves adjacent to twigs, of the proportion of sets in $\mathcal{S}'(T)$ which contain $u$ is therefore
\begin{eqnarray*}
\sum_{i=1}^t\frac{2^{a_i}+a_i2^{a_i-1}}{2^{a_i}+1}&\le&\sum_{i=1}^t\frac{28a_i+16}{45} \\
&=&\frac{28l_1+16t}{45},
\end{eqnarray*}
where the first inequality is the result of Lemma \ref{stpoly}, following. We shall now combine all 
these bounds to bound $\mu'_T$.
\begin{eqnarray*}
\mu'(T)&=&\sum_{v\in V(T)}\frac{|\{S\in \mathcal{S}'(T):v\in S\}|}{|\mathcal{S}'(T)|} \\
&\le&(n-t-l_1-l_2)+\frac{l_2}{2}+\frac{28l_1+16t}{45} \\
&=&n-\frac{29t}{45}-\frac{17l_1}{45}-\frac{l_2}{2} \\
&\le&n-\frac{29t}{45}-\frac{17l_1}{45}-\frac{n-l_1-2t+1}{4} \\
&=&\frac{3n}{4}-\frac{13t}{90}-\frac{23l_1}{180}-\frac{1}{4} \\
&\le&\frac{3n}{4}-\frac{2t}{5}-\frac{1}{4},
\end{eqnarray*}
where the final inequality follows from the observation that $l_1\ge 2t$.
\end{proof}
\begin{lemma}\label{stpoly}If $a\ge 2$ is an integer then $\frac{2^a+a2^{a-1}}{2^a+1}\le\frac{28a+16}{45}$.
\end{lemma}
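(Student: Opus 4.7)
The plan is to clear denominators and reduce the claim to a one-line exponential-polynomial inequality that splits cleanly into a few small cases plus a trivial tail.

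First I would multiply both sides of the claimed inequality by the positive quantity $45(2^a+1)$, expand, substitute $2^{a-1}=2^a/2$, multiply through by $2$, and collect terms in $2^a$. After the routine rearrangement this reduces the statement to the equivalent assertion
\[
(58-11a)\,2^a \;\le\; 56a+32.
\]
In this form the structure is transparent: the coefficient $58-11a$ changes sign at $a\ge 6$, while the right-hand side is linear and always positive. It is also worth observing (as a sanity check motivating the bound) that the line $\tfrac{28a+16}{45}$ is precisely the secant through the two points $\bigl(2,f(2)\bigr)=(2,\tfrac{8}{5})$ and $\bigl(3,f(3)\bigr)=(3,\tfrac{20}{9})$ on the graph of $f(a)=\frac{2^a+a2^{a-1}}{2^a+1}$, so one expects equality at $a=2,3$ and strict inequality elsewhere.

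Next I would dispose of the tail $a\ge 6$ in one line: there $58-11a\le 58-66=-8<0$, so the left-hand side of the reduced inequality is non-positive while the right-hand side is positive, and the inequality holds trivially (and strictly).

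Finally, I would verify the remaining cases $a\in\{2,3,4,5\}$ by direct arithmetic substitution into the reduced inequality $(58-11a)2^a\le 56a+32$: one gets $144=144$ at $a=2$, $200=200$ at $a=3$, $224\le 256$ at $a=4$, and $96\le 312$ at $a=5$. There is no real obstacle here — the only ``work'' is the algebraic reduction to a form in which the sign of $58-11a$ makes the asymptotic case immediate; the four small cases are a mechanical check.
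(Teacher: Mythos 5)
Your reduction to $(58-11a)2^a\le 56a+32$ is exactly the paper's inequality $11a2^a+56a+32\ge 58\times 2^a$ rearranged, and you handle it the same way: the sign of $58-11a$ settles $a\ge 6$, and $a=2,3,4,5$ are checked directly (with equality at $a=2,3$). Your arithmetic in the four small cases is correct, so the proof is right and essentially identical to the paper's.
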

\begin{proof}
Since
\begin{equation*}
\frac{28a+16}{45}-\frac{2^a+a2^{a-1}}{2^a+1}=\frac{11a2^a+56a+32-58\times 2^a}{90\left(2^a+1\right)},
\end{equation*}
it is sufficient to show that
\begin{equation*}
11a2^a+56a+32\ge 58\times 2^a.
\end{equation*}
If $a\ge 6$ then $11a2^a\ge 58\times 2^a$, so it only remains to check for $a=2,3,4,5$. For $a=2$ and $a=3$ 
the LHS and RHS are equal; for $a=4$ and $a=5$ the LHS is greater.
\end{proof}
If $T$ has $n$ vertices and at least $\varepsilon n$ twigs, then, 
$D(T)=\frac{1}{n}\mu(T)<\frac{3}{4}-\frac{2}{5}\varepsilon$, and so for a sequence of trees to have 
average subtree density tending to $\frac{3}{4}$ it is necessary for the proportion of twigs to tend to 0.
In the previous section we showed that it is also necessary for the proportion of leaves to tend to 
$\frac{1}{2}$. We might hope that it is necessary and sufficient for both the proportion of leaves to tend 
to $\frac{1}{2}$ and the proportion of twigs to tend to 0; we shall later show that this is the case.

\section{Rooted approximations}
In this section we will consider trees with a designated vertex, the \textit{root}. We shall compare 
the average order of a subtree to the average order of a subtree containing the root. For trees in 
$\mathcal{T}_3$, we shall show that for a suitably chosen root these two quantities differ by at most a 
constant.

Let $\mathcal{T}^*_3$ be the set of rooted trees such that the root has degree at least two and every 
other internal vertex has degree at least three, together with the single-vertex rooted tree. The 
motivation for this definition is that any tree in $\mathcal{T}_3$, when rooted at any internal vertex, 
is in $\mathcal{T}^*_3$, and that the definition of $\mathcal{T}^*_3$ permits induction in the following 
manner. For any $T\in \mathcal{T}^*_3$ other than the single-vertex tree, we may delete the root to leave 
two or more components. We may consider each component as a new tree; it must contain exactly one neighbour 
of the deleted root and, if rooted at that vertex, is in $\mathcal{T}^*_3$. In some cases we shall prove 
results for all rooted trees using a similar inductive process.

For a tree $T$ with a vertex $v$ write $\alpha(T,v)$ for the number of subtrees containing $v$ and 
$\bar{\alpha}(T,v)$ for the number not containing $v$; likewise, write $\lambda(T,v)$ and $\bar{\lambda}(T,v)$ 
for the average order of subtrees containing and not containing $v$ respectively. 
(Recall that, by definition, all subtrees are non-empty.) Similarly define $\alpha(T,e)$, etc., when $e$ is 
an edge. We start by observing that we may calculate $\lambda(T,v)$ in terms of the parameters of the 
components of $T-v$. This observation also appears in the papers of Jamison \cite{Jam83} and Vince and 
Wang \cite{VW10}.

\begin{lemma}\label{stsetup}
Let $T$ be a rooted tree with root $v$. If $v$ has $d$ neighbours
$v_1,v_2,\ldots v_d$, let $T_i$ for $1\le i \le d$ be the component of $T-v$
containing $v_i$. Then
\begin{equation*}
\lambda(T,v) =
1+\sum_{i=1}^d\frac{\lambda(T_i,v_i)\alpha(T_i,v_i)}{\alpha(T_i,v_i)+1}.
\end{equation*}
\end{lemma}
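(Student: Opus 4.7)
The plan is direct enumeration using the product structure of subtrees containing the root.

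First I would observe the crucial bijection: since $T-v$ splits into the components $T_1,\ldots,T_d$, a subtree $S$ of $T$ containing $v$ is determined by specifying, for each $i$, either the empty set or a subtree of $T_i$ containing $v_i$ (it must contain $v_i$ if it is nonempty, since $S$ must be connected through $v$). Conversely, any such choice yields a valid subtree of $T$ containing $v$. This gives the product formula
\begin{equation*}
\alpha(T,v)=\prod_{i=1}^d\bigl(\alpha(T_i,v_i)+1\bigr).
\end{equation*}

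Next I would compute the total order $\lambda(T,v)\alpha(T,v)=\sum_{S\ni v}|S|$. Writing $S_i=S\cap V(T_i)$, we have $|S|=1+\sum_{i=1}^d|S_i|$, so summing over all such $S$ yields
\begin{equation*}
\lambda(T,v)\alpha(T,v)=\alpha(T,v)+\sum_{i=1}^d\sum_{S\ni v}|S_i|.
\end{equation*}
For a fixed $i$, the inner sum counts each nonempty subtree $S_i'$ of $T_i$ containing $v_i$ once for every choice of the other branches, i.e.\ with multiplicity $\prod_{j\ne i}(\alpha(T_j,v_j)+1)=\alpha(T,v)/(\alpha(T_i,v_i)+1)$. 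Hence
\begin{equation*}
\sum_{S\ni v}|S_i|=\frac{\alpha(T,v)}{\alpha(T_i,v_i)+1}\sum_{S_i'\ni v_i}|S_i'|=\frac{\alpha(T,v)\,\lambda(T_i,v_i)\alpha(T_i,v_i)}{\alpha(T_i,v_i)+1}.
\end{equation*}

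Finally, dividing through by $\alpha(T,v)$ gives the claimed formula. There is no real obstacle here — the only point requiring care is the bookkeeping that each branch contributes independently, which is exactly what the bijection in the first step guarantees.
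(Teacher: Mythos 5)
Your proof is correct and rests on the same branch decomposition as the paper's: the paper groups subtrees containing $v$ into equivalence classes that fix everything outside $T_i$ (classes of size $\alpha(T_i,v_i)+1$) and averages the intersection with $T_i$ class by class, which is exactly your multiplicity count $\prod_{j\ne i}(\alpha(T_j,v_j)+1)$ phrased in terms of totals rather than averages. So this is essentially the same argument, just written as a direct enumeration instead of via equivalence classes.
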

\begin{proof}
If $S$ is chosen uniformly from all subtrees of $T$ which contain $v$, the
average number of vertices in $S$ is the sum of the average number of vertices
in each $S\cap T_i$, plus one (for $v$ itself, which is not in any $T_i$). For
each $i$, consider the equivalence relation where two subtrees $S_1, S_2$ are
equivalent if they differ only on vertices of $T_i$;
since each subtree contains $v$, each equivalence class contains
$\alpha(T_i,v_i)+1$ subtrees, one with empty intersection with $T_i$, and the
others having intersections corresponding to the subtrees of $T_i$ containing
$v_i$. The sum of the orders of these intersections is therefore
$\alpha(T_i,v_i)\lambda(T_i,v_i)$, so the average intersection over each
equivalence class, and hence the overall average intersection with $T_i$, is
$\frac{\lambda(T_i,v_i)\alpha(T_i,v_i)}{\alpha(T_i,v_i)+1}$. Summing over
$i$ gives the desired result.
\end{proof}

We shall need the following bound, which appears in the paper of Vince and Wang \cite{VW10}; 
we include their proof for completeness.
\begin{lemma}[\cite{VW10}]\label{stroot}
If $T\in\mathcal{T}^*_3$ with root $v$ then $\alpha(T,v)\ge \bar{\alpha}(T,v)$.
\end{lemma}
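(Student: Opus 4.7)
I would prove this by induction on $|V(T)|$, exploiting the recursive structure built into the definition of $\mathcal{T}^*_3$.

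\textbf{Base case.} For the single-vertex rooted tree, $\alpha(T,v)=1$ (the singleton subtree) and $\bar\alpha(T,v)=0$, so the inequality holds trivially.

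\textbf{Inductive step.} Suppose $T\in\mathcal{T}^*_3$ has root $v$ of degree $d\ge 2$ with neighbours $v_1,\ldots,v_d$, and let $T_i$ be the component of $T-v$ containing $v_i$, rooted at $v_i$. The key point is that each $(T_i,v_i)$ lies in $\mathcal{T}^*_3$: if $v_i$ is a leaf of $T$ then $T_i$ is the single-vertex tree, and otherwise $v_i$ is internal in $T$ with degree at least $3$, so $v_i$ has degree at least $2$ as the root of $T_i$, and every other internal vertex of $T_i$ retains its degree. Thus the inductive hypothesis applies to each $T_i$.

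Now I would decompose the two counts along the components. Every subtree of $T$ containing $v$ is obtained by choosing, in each $T_i$ independently, either the empty set or a subtree containing $v_i$, giving
\begin{equation*}
\alpha(T,v)=\prod_{i=1}^d\bigl(\alpha(T_i,v_i)+1\bigr).
\end{equation*}
Every subtree of $T$ not containing $v$ lies entirely in some single $T_i$, so
\begin{equation*}
\bar\alpha(T,v)=\sum_{i=1}^d\bigl(\alpha(T_i,v_i)+\bar\alpha(T_i,v_i)\bigr)\le 2\sum_{i=1}^d\alpha(T_i,v_i),
\end{equation*}
by the inductive hypothesis applied to each $(T_i,v_i)$.

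Writing $x_i=\alpha(T_i,v_i)$, note that $x_i\ge 1$ since $\{v_i\}\in\mathcal{S}(T_i)$. It therefore suffices to establish the elementary inequality
\begin{equation*}
\prod_{i=1}^d(x_i+1)\ge 2\sum_{i=1}^d x_i\qquad\text{for }d\ge 2,\ x_i\ge 1.
\end{equation*}
This follows by a short induction on $d$: for $d=2$, $(x_1+1)(x_2+1)-2(x_1+x_2)=(x_1-1)(x_2-1)\ge 0$; and for $d\ge 3$, using $x_d+1\ge 2$ and the inductive case $d-1$, we obtain $\prod_{i=1}^d(x_i+1)\ge 2\prod_{i=1}^{d-1}(x_i+1)\ge 4\sum_{i=1}^{d-1}x_i\ge 2\sum_{i=1}^d x_i$ (the last step using $2\sum_{i=1}^{d-1}x_i\ge 2x_d$, which holds since $d-1\ge 2$ and each $x_i\ge 1$).

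The main obstacle is essentially bookkeeping: confirming that the component subtrees $(T_i,v_i)$ really do remain in $\mathcal{T}^*_3$ (this is why $\mathcal{T}^*_3$ is defined to allow roots of degree $2$), and then reducing a multiplicative-versus-additive inequality to a clean elementary form. The underlying combinatorial fact—that a product of terms each at least $2$ dominates twice their sum—is what makes the argument go through.
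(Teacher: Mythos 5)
Your overall strategy is exactly the paper's: induct on the order, decompose $\alpha(T,v)=\prod_{i=1}^d(\alpha(T_i,v_i)+1)$ and $\bar{\alpha}(T,v)=\sum_{i=1}^d(\alpha(T_i,v_i)+\bar{\alpha}(T_i,v_i))$, use the inductive hypothesis to get $\bar{\alpha}(T,v)\le 2\sum_i\alpha(T_i,v_i)$, and finish with the elementary inequality $\prod_{i=1}^d(x_i+1)\ge 2\sum_{i=1}^d x_i$ for $d\ge 2$, $x_i\ge 1$. However, your proof of that elementary inequality has a genuine flaw in the step for $d\ge 3$. You bound $\prod_{i=1}^d(x_i+1)\ge 2\prod_{i=1}^{d-1}(x_i+1)\ge 4\sum_{i=1}^{d-1}x_i$ and then claim $4\sum_{i=1}^{d-1}x_i\ge 2\sum_{i=1}^d x_i$, i.e.\ $\sum_{i=1}^{d-1}x_i\ge x_d$, justified by ``$d-1\ge 2$ and each $x_i\ge 1$''. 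That justification only yields $\sum_{i=1}^{d-1}x_i\ge 2$, and the claimed inequality is false in general: take $d=3$, $x_1=x_2=1$, $x_3=100$ (which genuinely occurs in the application, e.g.\ a root with two leaf children and one large branch). Then your chain bottoms out at $2\prod_{i=1}^{2}(x_i+1)=8$, far below $2\sum_i x_i=204$, even though the target inequality itself holds ($\prod_i(x_i+1)=404\ge 204$). The error is discarding the factor $(x_d+1)$ before it has been played off against the term $x_d$ in the sum.

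The repair is precisely the paper's argument: keep that factor and write
$\prod_{i=1}^d(x_i+1)=(x_d+1)\prod_{i=1}^{d-1}(x_i+1)\ge 2(x_d+1)\sum_{i=1}^{d-1}x_i\ge 2x_d+2\sum_{i=1}^{d-1}x_i$,
where the last step needs only $\sum_{i=1}^{d-1}x_i\ge 1$. With that one-line correction your proof coincides with the paper's; everything else (the base case, the verification that each $(T_i,v_i)$ lies in $\mathcal{T}^*_3$, and the two counting identities) is sound and matches the original.
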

\begin{proof}
We use induction on the order of $T$. For the one-vertex tree the result is trivial; otherwise $v$ has at 
least two neighbours, $v_1, v_2, \ldots, v_d$, and $T-v$ has corresponding components $T_1, T_2, \ldots, T_d$. Then
\begin{equation*}
\alpha(T,v)=\prod_{i=1}^d(\alpha(T_i,v_i)+1),
\end{equation*}
since a subtree containing $v$ consists of $v$ together with either a subtree containing $v_i$ or 
no vertices from each $T_i$. Also
\begin{equation*}
\bar{\alpha}(T,v)=\sum_{i=1}^d(\alpha(T_i,v_i)+\bar{\alpha}(T_i,v_i)),
\end{equation*}
since each subtree not containing $v$ is a subtree of one of the $T_i$.

We claim that, for any positive integers $a_1, a_2, \ldots, a_d$, with $d\ge 2$, \\
$\prod_{i=1}^d(a_i+1)\ge 2\sum_{i=1}^da_i$. We prove this by induction on $d$. 
For $d=2$, $(a_1+1)(a_2+1)-2(a_1+a_2)=(a_1-1)(a_2-1)\ge 0$; for $d\ge 3$, 
\begin{eqnarray*}
\prod_{i=1}^d(a_i+1)&=&(a_d+1)\prod_{i=1}^{d-1}(a_i+1) \\
&\ge& 2(a_d+1)\sum_{i=1}^{d-1}a_i \\
&\ge& 2a_d+2\sum_{i=1}^{d-1}a_i,
\end{eqnarray*}
as required.

Hence, using the induction hypothesis for $T_i$,
\begin{eqnarray*}
\alpha(T,v)&=&\prod_{i=1}^d(\alpha(T_i,v_i)+1) \\
&\ge& 2\sum_{i=1}^d\alpha(T_i,v_i) \\
&\ge& \sum_{i=1}^d(\alpha(T_i,v_i)+\bar{\alpha}(T_i,v_i)) \\
&=& \bar{\alpha}(T,v),
\end{eqnarray*}
as required.
\end{proof}

We shall also need a lower bound on $\alpha(T,v)$.

\begin{lemma}\label{sttreecount}Let $T$ be any rooted tree with root $v$, $n$ vertices and $l$ leaves, 
not counting the root as a leaf. Then $\alpha(T,v)\ge n-l-1+2^l$. 
\end{lemma}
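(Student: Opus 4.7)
My plan is to prove the statement by strong induction on $n$. The base case $n=1$ is the single-vertex rooted tree: here $l=0$, $\alpha(T,v)=1$, and $n-l-1+2^l=1$, so equality holds.

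For the inductive step, let $v$ have $d\ge 1$ neighbours $v_1,\ldots,v_d$ with corresponding components $T_1,\ldots,T_d$ of $T-v$; write $n_i=|V(T_i)|$ and $l_i$ for the number of leaves of $T_i$ not counting $v_i$. The key starting point is the product formula
\[\alpha(T,v)=\prod_{i=1}^d\bigl(\alpha(T_i,v_i)+1\bigr),\]
which was derived in the proof of Lemma~\ref{stroot}. I would then split the indices according to whether $n_i=1$ or $n_i\ge 2$: in the former case $v_i$ is a leaf of $T$ adjacent to $v$ and the factor is exactly $2$, and if there are $d_1$ such indices this contributes $2^{d_1}$. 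For the indices in $I=\{i:n_i\ge 2\}$ I would apply the inductive hypothesis to obtain $\alpha(T_i,v_i)+1\ge (n_i-l_i)+2^{l_i}$.

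The next step is the bookkeeping translating $n$ and $l$ into the $n_i,l_i$. Since each $v_i$ with $n_i=1$ is a leaf of $T$ and each $v_i$ with $n_i\ge 2$ has degree at least $2$ in $T$ (hence is not a leaf of $T$), and vertices other than the $v_i$ have the same degree in $T$ as in their $T_i$, we get $n=1+d_1+\sum_{i\in I}n_i$ and $l=d_1+\sum_{i\in I}l_i$. Writing $m_i=n_i-l_i\ge 1$ and $b_i=2^{l_i}\ge 1$ for $i\in I$, these identities reduce the target $n-l-1+2^l$ to $\sum_{i\in I}m_i+2^{d_1}\prod_{i\in I}b_i$, so it suffices to prove
\[2^{d_1}\prod_{i\in I}(m_i+b_i)\;\ge\;\sum_{i\in I}m_i+2^{d_1}\prod_{i\in I}b_i.\]

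The final step is an elementary inequality: since $2^{d_1}\ge 1$, it is enough to show $\prod_{i\in I}(m_i+b_i)-\prod_{i\in I}b_i\ge\sum_{i\in I}m_i$, and this follows at once from the expansion
\[\prod_{i\in I}(m_i+b_i)-\prod_{i\in I}b_i=\sum_{\varnothing\neq S\subseteq I}\Bigl(\prod_{i\in S}m_i\Bigr)\Bigl(\prod_{i\notin S}b_i\Bigr),\]
by retaining only the singleton subsets $S=\{j\}$ and using $b_i\ge 1$. The case $I=\varnothing$ (all neighbours of $v$ are leaves) is handled separately and gives equality.

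The only real obstacle is the careful bookkeeping of which vertices are leaves of $T$ versus $T_i$, especially in the boundary case $n_i=1$ where $v_i$ is technically a leaf of $T$ but is the root of $T_i$ and so not counted by $l_i$. Once that is pinned down, both the inductive hypothesis and the product-expansion inequality slot in cleanly.
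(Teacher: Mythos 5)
Your proof is correct, but it takes a genuinely different route from the paper. The paper proves the bound by a direct construction: it exhibits, for each vertex $w$, the path-subtree from $v$ to $w$ (giving $n$ distinct subtrees containing $v$), and then, when $l>1$, adds the $2^l-l-1$ subtrees consisting of all non-leaf vertices together with any set of at least two leaves, checking these are distinct from the paths; this immediately yields $n-l-1+2^l$ and makes the tightness example (a ``broom'': a path from the root ending in a vertex with $l$ pendant leaves) transparent. You instead run a strong induction on $n$ through the product formula $\alpha(T,v)=\prod_i(\alpha(T_i,v_i)+1)$ from the proof of Lemma~\ref{stroot} (which indeed holds for arbitrary rooted trees), and your bookkeeping is right: the degree-one neighbours of $v$ are exactly the components with $n_i=1$ and are leaves of $T$, the roots $v_i$ of larger components are not leaves of $T$ and are not counted by $l_i$, so $n=1+d_1+\sum_{i\in I}n_i$ and $l=d_1+\sum_{i\in I}l_i$, reducing the claim to $2^{d_1}\prod_{i\in I}(m_i+b_i)\ge\sum_{i\in I}m_i+2^{d_1}\prod_{i\in I}b_i$ with $m_i\ge 1$, $b_i\ge 1$, which your expansion argument (keeping only the singleton terms) settles; the case $I=\varnothing$ gives equality, matching the star. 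What each approach buys: the paper's argument is shorter, non-inductive, and exhibits the extremal configuration explicitly, while yours is more mechanical, reuses the recursion already central to the paper, and avoids having to verify that explicitly constructed vertex sets induce subtrees and are pairwise distinct.
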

\begin{proof}
For each vertex $w$, consider the subtree $S_w$ consisting of a path from $v$
to $w$ (the single-vertex path if $v=w$). Each such subtree is distinct (since
if $v\ne w$ then $w$ is the unique leaf of $S_w$ other than $v$) and contains
$v$, so $\alpha(T,v)\ge n$. In addition, if $l>1$, consider the subtrees consisting
of all vertices other than the leaves (including the root, which we do not count as a leaf)
together with any set of at least two leaves. All such subtrees are distinct and
are not paths ending at the root, so not equal to $S_w$ for any $w$. There are $2^l-l-1$ such subtrees,
so $\alpha(T,v)\ge n-l-1+2^l\ge n$.
\end{proof}
This bound is best possible for any $l<n$, as seen by considering the tree consisting of a vertex adjacent 
to $l$ leaves and connected to the root by a path of length $n-l-1$ (by which we mean that this vertex is 
itself the root when $n-l-1=0$).

Since the root is not counted among the leaves, even if it has degree 1, $n-l-1\ge 0$ and so we shall 
sometimes use the weaker bound $\alpha(T,v)\ge 2^l$.

Now we shall use these bounds to show that, for any $T\in \mathcal{T}_3$ with at least 30 vertices, we 
may approximate $\mu(T)$ by $\lambda(T,v)$ for some suitable choice of $v$.
\begin{lemma}\label{stapprox1}
If $T\in \mathcal{T}_3$ is a tree with $n\ge 30$ vertices then either there is an edge $e$ for which 
$2\alpha(T,e)\ge n\bar{\alpha}(T,e)$ or there is an internal vertex $v$ for which 
$2\alpha(T,v)\ge n\bar{\alpha}(T,v)$.
\end{lemma}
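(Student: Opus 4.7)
The plan is to use the centroid of $T$ to locate the required vertex or edge. Every tree has a centroid, consisting either of a single vertex $v_0$ with every component of $T-v_0$ of size at most $(n-1)/2$, or of a pair of adjacent vertices joined by an edge $e_0$ with both components of $T-e_0$ of size exactly $n/2$. Since $n \ge 30 \ge 4$, any leaf would leave a component of size $n-1$ on removal, so in the single-vertex case the centroid is internal.

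In the centroid-edge case, I would show that $e_0$ itself satisfies the inequality. Both components $T_u, T_v$ of $T - e_0$, rooted at their endpoints, lie in $\mathcal{T}^*_3$ and have $n/2$ vertices; adapting the leaf-counting argument preceding Lemma~\ref{twigub}, any rooted tree in $\mathcal{T}^*_3$ with $k$ vertices has at least $(k+1)/2$ leaves, so each of $T_u, T_v$ has at least $(n+2)/4$ leaves. Lemma~\ref{sttreecount} then gives $\alpha(T_u, u), \alpha(T_v, v) \ge 2^{(n+2)/4}$, while Lemma~\ref{stroot} bounds $\bar{\alpha}$ by $\alpha$ on each side. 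Writing $a = \alpha(T_u, u)$, $b = \alpha(T_v, v)$, we have $\alpha(T, e_0) = ab$ and $\bar{\alpha}(T, e_0) \le 2(a+b)$, so $\bar{\alpha}(T,e_0)/\alpha(T,e_0) \le 4/\min(a,b) \le 4\cdot 2^{-(n+2)/4}$, which is at most $2/n$ for $n \ge 30$.

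In the single-vertex-centroid case, I would argue $v_0$ itself works. Denoting the components of $T-v_0$ by $T_1,\ldots,T_d$ (with $d \ge 3$), neighbours $v_i$, sizes $n_i \le (n-1)/2$, and writing $\beta_i = \alpha(T_i, v_i)$, $\gamma_i = \bar{\alpha}(T_i, v_i)$, we have $\alpha(T, v_0) = \prod(\beta_i+1)$ and $\bar{\alpha}(T, v_0) = \sum(\beta_i + \gamma_i) \le 2\sum\beta_i$ by Lemma~\ref{stroot}. Splitting neighbours into leaves of $T$ (with $\beta_i=1$) and internal ones (for which Lemma~\ref{sttreecount} and the $\mathcal{T}^*_3$ leaf bound give $\beta_i \ge 2^{l_i}$ with $l_i \ge (n_i+1)/2$), the product expands to $\alpha(T, v_0) \ge 2^{l(T)} \ge 2^{(n+2)/2}$. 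Bounding $\sum\beta_i$ from above via the centroid size constraint $n_i \le (n-1)/2$ together with the crude bound $\beta_i \le 2^{n_i-1}$ should then give the desired $\bar{\alpha}/\alpha \le 2/n$.

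The main obstacle, and the likely source of the $n \ge 30$ threshold, is the single-vertex-centroid subcase where $d = 3$ and one component is close to the maximal size $(n-1)/2$. There the product $\prod(\beta_i+1)$ gains only a constant extra multiplicative factor from the two smaller components, so the exponential lower bound $\alpha(T, v_0) \ge 2^{(n+2)/2}$ must be fully used to beat the competing bound on $\bar{\alpha}$, which can be dominated by a single term of size roughly $2^{n_1-1}$. If the direct vertex bound is insufficient, the fallback is to switch to the edge from $v_0$ to the large neighbour $v_1$, where the multiplicative factor $\beta_1$ reappears in the numerator of $\alpha/\bar{\alpha}$ and the $\bar{\alpha}$ contribution from $T_1$ is no longer dominant.
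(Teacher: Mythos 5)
Your reduction to the (vertex-count) centroid is a genuinely different route from the paper's, which instead splits according to whether some edge has at least $\log_2 n+1$ leaves strictly on each side, and your centroid-edge case is correct as written. The gap is in the single-vertex-centroid case, which is the heart of the lemma and which you only sketch. The bound you propose there cannot work: from $\beta_i\le 2^{n_i-1}$ and $n_i\le(n-1)/2$ the best you can extract is $\bar{\alpha}(T,v_0)\le 2\sum_i\beta_i\le 2n\cdot 2^{(n-3)/2}$, and against $\alpha(T,v_0)\ge 2^{(n+2)/2}$ this gives only $\bar{\alpha}(T,v_0)/\alpha(T,v_0)\le n\cdot 2^{-5/2}$; the two exponentials have essentially the same exponent, so you miss the target $2/n$ by a factor of order $n^2$. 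This is not confined to the tight subcase $d=3$ that you flag --- the comparison fails for every $d$, because bounding each $\beta_i$ by an absolute exponential in $n$ discards the product structure of $\alpha(T,v_0)$. The fallback you mention (switching to the edge $v_0v_1$ into the large branch) is not carried out, and as stated it needs further analysis, since for that edge the relevant quantity is essentially $\min\bigl(\beta_1,\prod_{j\ne 1}(\beta_j+1)\bigr)$ and neither factor is automatically large enough by itself.

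The vertex case can, however, be repaired inside your framework. Keep $\bar{\alpha}(T,v_0)\le 2\sum_i\beta_i\le 2d\,\beta_{i^*}$, where $\beta_{i^*}=\max_i\beta_i$, and instead of bounding $\sum_i\beta_i$ absolutely, compare $\beta_{i^*}$ with the remaining factors of the product: $\alpha(T,v_0)\ge\beta_{i^*}\prod_{j\ne i^*}(\beta_j+1)$, and every factor satisfies $\beta_j+1\ge 2^{(n_j+1)/2}$ (with equality for a leaf neighbour; for an internal neighbour use the $\mathcal{T}^*_3$ leaf count together with Lemma~\ref{sttreecount}, exactly as you do elsewhere, plus Lemma~\ref{stroot} for the $\bar{\alpha}$ bound). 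This yields $\alpha(T,v_0)/\bar{\alpha}(T,v_0)\ge 2^{((n-1-n_{i^*})+(d-1))/2}/(2d)\ge 2^{(n-1)/4+(d-1)/2}/(2d)$, where the centroid property $n_{i^*}\le(n-1)/2$ is used. Since $2^{(d-1)/2}/d$ is increasing for $d\ge 3$, the worst case is $d=3$, and the requirement $2\alpha(T,v_0)\ge n\bar{\alpha}(T,v_0)$ reduces to $2^{(n-1)/4}\ge\tfrac{3n}{2}$, which holds comfortably for all $n\ge 30$. With that replacement (and your edge case unchanged) the centroid argument does prove the lemma at the stated threshold; as written, though, the proposal does not establish the single-vertex case.
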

\begin{proof}
Pick any edge $e=\{v_1,v_2\}$. Let $T_1,T_2$ be the components of $T-e$ containing $v_1,v_2$ respectively. 
Then each subtree not containing $e$ is a subtree of one component of $T-e$, and each subtree containing 
$e$ is the union of a subtree of $T_1$ containing $v_1$ and a subtree of $T_2$ containing $v_2$, so
\begin{eqnarray*}
\alpha(T,e) &=& \alpha(T_1,v_1)\alpha(T_2,v_2), \\
\bar{\alpha}(T,e) &=& \bar{\alpha}(T_1,v_1)+\bar{\alpha}(T_2,v_2)+\alpha(T_1,v_1)+\alpha(T_2,v_2).
\end{eqnarray*}
Without loss of generality we may assume $\alpha(T_1,v_1) \ge \alpha(T_2,v_2)$, so, using Lemma \ref{stroot}, 
$\bar{\alpha}(T,e)\le 4\alpha(T_1,v_1)$ and so $2\alpha(T,e)\ge\frac{1}{2}\alpha(T_2,v_2)\bar{\alpha}(T,e)$. 
If $T_2$ has $k$ leaves then $\alpha(T_2, v_2)\ge 2^k$, so $e$ suffices if the number of leaves on each side 
is at least $\log_2n+1$. 

If no such $e$ exists then we must have a central vertex $v$ with each component of $T-v$ having fewer than 
$\log_2n+1$ leaves. (Since $T$ has at least $\frac{n+2}{2}$ leaves, and this is much more than $\log_2n+1$ 
for $n\ge 30$, certainly such a $v$ is an internal vertex.) Let $w$ be the neighbour maximising $\alpha(T_w,w)$
(where $T_w$ is the component of $T-v$ containing $w$); suppose $T_w$ has $k$ leaves and $T$ has $l$. Now
\begin{eqnarray*}
\alpha(T,v) &\ge& \alpha(T,vw) \\
&\ge& 2^{l-k}\alpha(T_w,w), \\
\bar{\alpha}(T,v) &=& \sum_{u\in\Gamma(v)}\left(\alpha(T_u,u)+\bar{\alpha}(T_u,u)\right) \\
&\le& \sum_{u\in\Gamma(v)}2\alpha(T_u,u) \\
&\le& 2n\alpha(T_w,w),
\end{eqnarray*}
and $l\ge\frac{n+2}{2}$, $k<\log_2n+1$, so 
\begin{eqnarray*}
\frac{\alpha(T,v)}{\bar{\alpha}(T,v)}&>&\frac{2^{\frac{n}{2}-\log_2n}}{2n} \\
&=&\frac{2^{\frac{n}{2}}}{2n^2}.
\end{eqnarray*}
So we are done if $2^\frac{n}{2}\ge n^3$, i.e. if $n\ge 6\log_2n$, which holds if $n\ge 30$.
\end{proof}
This result immediately gives us the desired approximation.
\begin{corollary}\label{stapprox2}
If $T\in\mathcal{T}_3$ has $n\ge 30$ vertices then there is some internal $v$ with $|\mu(T)-\lambda(T,v)|<2$.
\end{corollary}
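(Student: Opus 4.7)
The plan is to apply Lemma~\ref{stapprox1} directly and convert the ratio $\alpha/\bar\alpha$ bound it produces into the desired approximation of $\mu(T)$ by $\lambda(T,v)$. The main identity I would use is that $\mu(T)$ is a weighted average of $\lambda(T,v)$ and $\bar\lambda(T,v)$: for any vertex $v$,
\[
\mu(T)=\frac{\alpha(T,v)\lambda(T,v)+\bar\alpha(T,v)\bar\lambda(T,v)}{\alpha(T,v)+\bar\alpha(T,v)},
\]
so $\mu(T)-\lambda(T,v)=\frac{\bar\alpha(T,v)(\bar\lambda(T,v)-\lambda(T,v))}{\alpha(T,v)+\bar\alpha(T,v)}$. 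Since every subtree has order between $1$ and $n$ (and subtrees avoiding $v$ have order at most $n-1$), $|\lambda(T,v)-\bar\lambda(T,v)|\le n-1$. Hence if $2\alpha(T,v)\ge n\bar\alpha(T,v)$, i.e.\ $\bar\alpha(T,v)/\alpha(T,v)\le 2/n$, then
\[
|\mu(T)-\lambda(T,v)|\le\frac{(n-1)\bar\alpha(T,v)}{\alpha(T,v)+\bar\alpha(T,v)}\le\frac{(n-1)\bar\alpha(T,v)}{\alpha(T,v)}\le\frac{2(n-1)}{n}<2.
\]

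If Lemma~\ref{stapprox1} supplies an internal vertex $v$ satisfying the inequality, we are done immediately by the above. Otherwise it supplies an edge $e=\{v_1,v_2\}$ with $2\alpha(T,e)\ge n\bar\alpha(T,e)$, and I would transfer the bound to one of its endpoints. Every subtree that contains $e$ contains $v_1$, so $\alpha(T,v_1)\ge\alpha(T,e)$; and every subtree that fails to contain $v_1$ certainly fails to contain $e$, so $\bar\alpha(T,v_1)\le\bar\alpha(T,e)$. Combining these gives $2\alpha(T,v_1)\ge n\bar\alpha(T,v_1)$, and symmetrically for $v_2$. Because $n\ge 30$, the endpoints of $e$ cannot both be leaves (else $T$ would be the two-vertex tree), so at least one of $v_1,v_2$ is internal, and the first paragraph applies to that endpoint.

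The proof is therefore essentially a bookkeeping argument on top of Lemma~\ref{stapprox1}; the only subtleties I anticipate are (i) squeezing out a strict inequality $<2$, which is why I keep the $n-1$ factor rather than carelessly bounding $|\lambda-\bar\lambda|$ by $n$, and (ii) making sure the selected vertex is internal, which is handled by the trivial observation that an edge of a tree on $n\ge 30$ vertices cannot have both endpoints as leaves.
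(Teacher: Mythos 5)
Your proposal is correct and follows essentially the same route as the paper: the weighted-average identity for $\mu(T)$ together with the trivial bounds on $\lambda$ and $\bar\lambda$, an application of Lemma~\ref{stapprox1}, and the transfer of the edge inequality to an internal endpoint via $\alpha(T,v)\ge\alpha(T,e)$ and $\bar\alpha(T,v)\le\bar\alpha(T,e)$. The only differences are cosmetic bookkeeping (your $n-1$ versus the paper's strict bounds $0<\lambda,\bar\lambda<n$ to secure the strict inequality).
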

\begin{proof}
Since 
\begin{eqnarray*}
\mu(T)&=&\frac{\alpha(T,v)\lambda(T,v)+\bar{\alpha}(T,v)\bar{\lambda}(T,v)}{\alpha(T,v)+\bar{\alpha}(T,v)} \\
&=&\lambda(T,v)-\frac{\bar{\alpha}(T,v)\left(\lambda(T,v)-\bar{\lambda}(T,v)\right)}{\alpha(T,v)+\bar{\alpha}(T,v)}
\end{eqnarray*}
and $0<\lambda(T,v),\bar{\lambda}(T,v)<n$, it is sufficient to find an internal $v$ such that 
$2\alpha(T,v)\ge n\bar{\alpha}(T,v)$. Either we are immediately done by Lemma \ref{stapprox1} or we have an 
edge satisfying the same relation; in the latter case at least one vertex on that edge is internal, and will 
suffice since if $e=vw$ then $\alpha(T,v)\ge\alpha(T,e)$ and $\bar{\alpha}(T,e)\ge\bar{\alpha}(T,v)$. 
\end{proof}

\section{Ranking vertices and lower bounds}
In order to show that we require the proportion of leaves to tend to 1 if the average subtree density is to 
tend to $\frac{1}{2}$ we need a better lower bound for trees which have many internal vertices. The following 
bound may be deduced from our subsequent stronger result, but we shall prove this weaker version first to give 
the basic idea.
\begin{lemma}
If $T\in \mathcal{T}^*_3$ has root $v$ and $n$ vertices of which $k$ are not leaves then 
$\lambda(T,v)\ge \frac{n+1}{2}+\frac{k-1}{10}$.
\end{lemma}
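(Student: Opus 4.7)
The plan is to induct on $n$. In the base case $T$ is the single-vertex tree, so $n=1$, $k=0$ and $\lambda(T,v)=1$, and the bound reads $1\ge\frac{1}{2}(1+1)+\frac{1}{10}(0-1)=\frac{9}{10}$, which holds.

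For the inductive step, let $v_1,\dots,v_d$ be the neighbours of the root $v$ and, for each $i$, let $T_i$ be the component of $T-v$ containing $v_i$, rooted at $v_i$; each $T_i\in\mathcal{T}^*_3$. Write $\lambda_i=\lambda(T_i,v_i)$, $\alpha_i=\alpha(T_i,v_i)$, and let $T_i$ have $n_i$ vertices and $k_i$ non-leaves, so $n=1+\sum n_i$ and $k=1+\sum k_i$. By Lemma~\ref{stsetup},
\[
\lambda(T,v)=1+\sum_i \frac{\lambda_i\alpha_i}{\alpha_i+1},
\]
so the target reduces to showing, for each $i$, that $\frac{\lambda_i\alpha_i}{\alpha_i+1}\ge \frac{n_i}{2}+\frac{k_i}{10}$. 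For a singleton $T_i$ both sides equal $\frac{1}{2}$. For a non-singleton $T_i$, the inductive hypothesis gives $\lambda_i-\frac{n_i}{2}-\frac{k_i}{10}\ge \frac{2}{5}$, so using $\frac{\lambda_i\alpha_i}{\alpha_i+1}=\lambda_i-\frac{\lambda_i}{\alpha_i+1}$ the inequality reduces to the upper bound $\lambda_i\le \frac{2(\alpha_i+1)}{5}$.

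The main obstacle is establishing this auxiliary upper bound, which I would prove in a parallel induction: for every non-singleton $T\in\mathcal{T}^*_3$, $5\lambda(T,v)\le 2\alpha(T,v)+2$. The base case is the cherry $K_{1,2}$ rooted at its centre ($\lambda=2$, $\alpha=4$, equality). For the inductive step I use the product formula $\alpha(T,v)=\prod_i(\alpha_i+1)$ from the proof of Lemma~\ref{stroot} together with Lemma~\ref{stsetup}, then substitute the inductive bound $\lambda_i\le 2(\alpha_i+1)/5$ for each non-singleton child and the identity $\lambda_i\alpha_i/(\alpha_i+1)=\frac{1}{2}$ for each singleton child. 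Setting $\beta_i=\alpha_i+1$ and letting $s,t$ denote the numbers of singleton and non-singleton children of $v$, the required $5\lambda(T,v)\le 2\alpha(T,v)+2$ becomes
\[
\frac{5s}{2}+2\sum_{i\in N}\beta_i+3-2t\le 2^{s+1}\prod_{i\in N}\beta_i,
\]
where $N$ indexes the non-singleton children. Since a non-singleton tree in $\mathcal{T}^*_3$ has at least two leaves, $\alpha_i\ge 2^2=4$ and hence $\beta_i\ge 5$ for every $i\in N$, so this inequality is straightforward to verify by separately handling $t=0$ (which reduces to $\frac{5s}{2}+3\le 2^{s+1}$, tight at $s=2$) and $t\ge 1$ (where the exponential product dominates). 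Both inductions close simultaneously, with equality in the auxiliary bound attained only at $T=K_{1,2}$.
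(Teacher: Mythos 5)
Your argument is correct, and it shares the paper's skeleton — induction on $n$, applying Lemma~\ref{stsetup} at the root and reducing to the per-child inequality $\frac{\lambda_i\alpha_i}{\alpha_i+1}\ge\frac{n_i}{2}+\frac{k_i}{10}$, with leaf children contributing exactly $\frac{1}{2}$ — but it closes that inequality by a genuinely different device. The paper uses the leaf count of $T_i$ (at least $\frac{n_i+1}{2}$ leaves, so $\alpha_i\ge 2^{\frac{n_i+1}{2}}\ge\frac{11n_i-1}{8}\ge\frac{5n_i+k_i}{4}$) together with monotonicity of $x\mapsto\frac{Cx}{x+1}$, so that a single numerical estimate makes the algebra collapse exactly to $\frac{n_i+k_i/5}{2}$. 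You instead isolate the sufficient condition $\frac{\lambda_i}{\alpha_i+1}\le\frac{2}{5}$ and prove the standalone bound $5\lambda(T,v)\le 2\alpha(T,v)+2$ for non-singleton $T\in\mathcal{T}^*_3$ by a second induction using the product formula $\alpha(T,v)=\prod_i(\alpha_i+1)$; this buys you a clean structural lemma (tight at the cherry) and avoids the explicit exponential-versus-linear comparison, at the cost of a case check of the inequality $\frac{5s}{2}+2\sum_{i\in N}\beta_i+3-2t\le 2^{s+1}\prod_{i\in N}\beta_i$. That check does go through as you say, but two points should be made explicit: it relies on $s+t\ge 2$ (the root of a non-singleton tree in $\mathcal{T}^*_3$ has degree at least $2$), without which the case $(s,t)=(0,1)$ is a genuine counterexample to the displayed inequality; and the phrase ``the exponential product dominates'' should be replaced by the short verification for the small cases $(s,t)=(1,1)$ and $(0,2)$ (using $\beta_i\ge 5$), after which the general case is immediate. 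With those details written out, your proof is complete and tight exactly where the paper's is.
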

\begin{proof}
Again, we prove this by induction on $n$; it is trivial for $n=1$. If $n>1$, let $v$ have $l$ neighbours 
which are leaves, and $m$ which are not. Let $v_1,\ldots,v_m$ be these neighbours and 
$v_{m+1},\ldots,v_{m+l}$ be the leaves. Let $T_i$ be the component of $T-v$ containing $v_i$, and write 
$n_i, k_i$ for the number of vertices and the number of vertices which are not leaves respectively.
Now $\sum_{i=1}^m n_i=n-l-1$, $\sum_{i=1}^m k_i=k-1$, and
\begin{eqnarray*}
\lambda(T,v) &=& 1+\sum_{i=1}^{m+l}\frac{\lambda(T_i,v_i)\alpha(T_i,v_i)}{\alpha(T_i,v_i)+1} \\
&=& 1+\frac{l}{2}+\sum_{i=1}^m\frac{\lambda(T_i,v_i)\alpha(T_i,v_i)}{\alpha(T_i,v_i)+1} \\
&\ge& 1+\frac{l}{2}+\sum_{i=1}^m\frac{\left(n_i+1+\frac{k_i-1}{5}\right)\alpha(T_i,v_i)}
{2\left(\alpha(T_i,v_i)+1\right)},
\end{eqnarray*}
by the induction hypothesis. Note that, for $1\le i\le m$, since $T_i$ is not a single vertex but 
$T_i \in \mathcal{T}^*_3$, $n_i\ge 3$. Also any tree in $\mathcal{T}^*_3$ with $n_i$ vertices has at least 
$\frac{n_i+1}{2}$ leaves. Since the set of all non-leaf vertices together with any subset of leaves forms a 
subtree of $T_i$ containing $v_i$, $\alpha(T_i,v_i)\ge 2^{\frac{n_i+1}{2}} \ge \frac{11n_i-1}{8}$, where the 
second inequality is easily checked to hold for $n_i\ge 3$. Finally, since $T_i$ has at least 
$\frac{n_i+1}{2}$ leaves, $k_i\le \frac{n_i-1}{2}$ and so $\frac{11n_i-1}{8}\ge \frac{5n_i+k_i}{4}$. Thus
\begin{eqnarray*}
\lambda(T,v) &\ge& 1+\frac{l}{2}+\sum_{i=1}^m\frac{\left(n_i+1+\frac{k_i-1}{5}\right)\alpha(T_i,v_i)}
{2\left(\alpha(T_i,v_i)+1\right)} \\
&\ge& 1+\frac{l}{2}+\sum_{i=1}^m\frac{\left(n_i+1+\frac{k_i-1}{5}\right)\left(\frac{5n_i+k_i}{4}\right)}
{2\left(\frac{5n_i+k_i}{4}+1\right)} \\
&=& 1+\frac{l}{2}+\sum_{i=1}^m\frac{n_i+\frac{k_i}{5}}{2} \\
&=& \frac{n+1}{2}+\frac{k-1}{10}
\end{eqnarray*}
as required.
\end{proof}
This result is best possible for $n\ge 3k+1$, attained by the tree whose root has $n-3k$ children which are 
leaves and $k$ other children, each having two children. If $T\in \mathcal{T}_3$ then we have shown that 
$\lambda(T,v)\ge \frac{n+1}{2}+\frac{k-1}{10}$ for any internal vertex $v$, since if $T$ is rooted at $v$ 
it is in $\mathcal{T}^*_3$.

We may immediately conclude our desired result.
\begin{theorem}\label{st12}
A sequence of distinct trees in $\mathcal{T}_3$ has average subtree density tending to $\frac{1}{2}$ if and only if the proportion of leaves tends to 1.
\end{theorem}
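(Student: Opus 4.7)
The plan is to prove each direction separately, using results already in hand.

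For the sufficiency (if), I would reuse the remark made earlier in the paper: if $T$ has $n$ vertices of which at least $(1-\varepsilon)n$ are leaves, then by Lemma~\ref{stleaf} each leaf contributes at most $1/2$ to the sum $\mu(T)=\sum_v (\text{proportion of subtrees containing }v)$, while each non-leaf contributes at most $1$. This gives $\mu(T)<\varepsilon n+\tfrac{1}{2}(1-\varepsilon)n$, hence $D(T)<\tfrac{1}{2}(1+\varepsilon)$. Combined with the Vince--Wang lower bound $D(T)\ge\tfrac{1}{2}$ for $T\in\mathcal{T}_3$, we conclude $D(T)\to\tfrac{1}{2}$ whenever the leaf proportion tends to $1$.

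For necessity (only if), I would argue contrapositively: suppose a sequence $(T_n)$ in $\mathcal{T}_3$ has non-leaf proportion bounded below by some $\delta>0$, and show $\liminf D(T_n)\ge\tfrac{1}{2}+c\delta$ for an explicit constant $c$. Let $n$ denote the order of $T_n$ and $k$ the number of non-leaves, so $k\ge\delta n$. Since we may assume $n\ge 30$, Corollary~\ref{stapprox2} supplies an internal vertex $v$ with $|\mu(T_n)-\lambda(T_n,v)|<2$, and rooting $T_n$ at $v$ gives a tree in $\mathcal{T}^*_3$ to which the preceding lemma applies, yielding $\lambda(T_n,v)\ge\tfrac{n+1}{2}+\tfrac{k-1}{10}$. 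Dividing by $n$ and combining,
\begin{equation*}
D(T_n)=\frac{\mu(T_n)}{n}\ge\frac{1}{2}+\frac{k}{10n}+O\!\left(\frac{1}{n}\right)\ge\frac{1}{2}+\frac{\delta}{10}+o(1),
\end{equation*}
so $D(T_n)$ cannot tend to $\tfrac{1}{2}$. Contrapositively, $D(T_n)\to\tfrac{1}{2}$ forces $k/n\to 0$, i.e.\ the leaf proportion tends to $1$.

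All the ingredients are already in place, so no step is really an obstacle; the role of the statement is to package Lemma~\ref{stleaf}, the Vince--Wang lower bound, Corollary~\ref{stapprox2}, and the $\lambda(T,v)\ge\tfrac{n+1}{2}+\tfrac{k-1}{10}$ lemma into a single clean equivalence. The one point to be careful about is that Corollary~\ref{stapprox2} only applies for $n\ge 30$, but since we are dealing with an infinite sequence of distinct trees, finitely many terms can be discarded without affecting the limit.
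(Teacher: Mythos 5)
Your proposal is correct and follows essentially the same route as the paper: both directions rest on Corollary~\ref{stapprox2} together with the bound $\lambda(T,v)\ge\frac{n+1}{2}+\frac{k-1}{10}$, with the Lemma~\ref{stleaf} leaf-count bound handling the sufficiency, the only cosmetic difference being that you quote the Vince--Wang inequality $D(T)\ge\frac{1}{2}$ where the paper instead reads the lower bound $\frac{3}{5}-\frac{1}{10}\xi_i$ off the same lemma. The only point to tidy is the contrapositive: the negation of ``leaf proportion tends to $1$'' gives a subsequence on which the non-leaf proportion is at least $\delta$, so your argument should be run along that subsequence, which is immediate.
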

\begin{proof}
Write $(T_i)_{i\ge 0}$ for our sequence and let $T_i$ have $n_i$ vertices and $n_i\xi_i$ leaves. Since 
there are only finitely many trees of each order, $n_i\to \infty$ and for $i$ sufficiently large there 
exists an internal vertex $v_i$ such that $|\mu(T_i)-\lambda(T_i,v_i)|<2$. Then
$|D(T_i)-\frac{1}{n_i}\lambda(T_i,v_i)|\to 0$, and 
\begin{equation*}
\frac{3}{5}-\frac{1}{10}\xi_i<\frac{\lambda(T_i,v_i)}{n_i}<1-\frac{1}{2}\xi_i,
\end{equation*}
so $D(T_i)\to\frac{1}{2}$ if and only if $\xi_i\to 1$.
\end{proof}

For density tending to $\frac{3}{4}$, we need to divide the vertices into more classes. We shall consider 
only rooted trees for this purpose.

For a rooted tree $T\in \mathcal{T}^*_3$ with root $v$, we regard each vertex $w\ne v$ as having one 
\textit{parent}, the neighbour of $w$ on the path from $v$ to $w$, and $d(w)-1$ \textit{children}, the 
other neighbours of $w$. Thus each child of a vertex is a neighbour which is further from the root. We 
shall inductively define the \textit{rank} of a vertex other than the root: a vertex with no 
children (i.e. a leaf) has rank zero; the rank of each other vertex is one more than the maximum rank of 
its children. An equivalent explicit definition is that the rank is the maximum length of a path starting 
at that vertex which does not include its parent. We shall leave the rank of the root undefined. 

We wish to find a lower bound on $\lambda(T,v)$ in terms of the ranks of the vertices. Write $m_j(T,v)$ for 
the number of vertices of rank $j$ when $T$ is rooted at $v$ (for any $T$, $m_j(T,v)=0$ for all sufficiently 
large $j$, certainly for any $j$ exceeding the diameter of $T$). Note that since each vertex of rank $j+1$ 
has at least one child of rank $j$, and these are all distinct, $m_{j+1}(T,v)\le m_j(T,v)$.

\begin{theorem}
If $T\in \mathcal{T}^*_3$ has root $v$ and $n$ vertices then 
\begin{equation*}
\lambda(T,v)\ge 1+\sum_{j\ge 0}c_jm_j(T,v),
\end{equation*}
where the values of $c_j$ for $j\ge 0$ are given by 
\begin{equation*}
c_j=1-\frac{1+\frac{j}{2}+\sum_{i=0}^{j-1}c_i}{2^{j+1}+j};
\end{equation*}
note that when $j=0$ the sum in the above expression is empty and so $c_0=\frac{1}{2}$.
\end{theorem}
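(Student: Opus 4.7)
The plan is to prove the theorem by induction on $n$, strengthening the inductive hypothesis to simultaneously control $\alpha(T,v)$ from below, which is needed to close the loop. Writing $R = R(T,v)$ for the depth of $T$ from $v$ (so $R = 0$ for the single-vertex tree and $R = 1 + \max_k r_k$ otherwise) and $B(T,v) = \sum_{j\geq 0} c_j m_j(T,v)$, I will show simultaneously that for every $T \in \mathcal{T}^*_3$ with root $v$:
\[
\text{(i)}\ \lambda(T,v) \geq 1 + B(T,v), \qquad \text{(ii)}\ (\alpha(T,v)+1)(1 - c_R) \geq 1 + B(T,v).
\]
The base case $n = 1$ is immediate since $\lambda = \alpha = 1$, $B = 0$, and $c_0 = 1/2$ make both statements into equalities.

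For the inductive step, let $v$'s children be $u_1, \ldots, u_d$ (with $d \geq 2$) and write $T_k, r_k, B_k, \alpha_k, \lambda_k$ for the usual quantities associated to each child. Partitioning the non-root vertices of $T$ into $\{u_k\}$ together with the non-root vertices of $T_k$ gives $B = \sum_k(c_{r_k} + B_k)$ and $R = 1 + \max_k r_k$. For statement (i), the two inductive hypotheses for each $T_k$ combine to give
\[
\frac{\lambda_k \alpha_k}{\alpha_k + 1} \geq (1 + B_k)\!\left(1 - \frac{1 - c_{r_k}}{1 + B_k}\right) = B_k + c_{r_k},
\]
and summing with the $+1$ from Lemma~\ref{stsetup} yields $\lambda(T,v) \geq 1 + B$. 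For statement (ii), set $x_k = (1 + B_k)/(1 - c_{r_k})$ so that the inductive hypothesis gives $\alpha(T,v) = \prod_k(\alpha_k + 1) \geq \prod_k x_k$; the required inequality then reduces to the purely algebraic claim
\[
(1 - c_R)\!\left(1 + \prod_k x_k\right) \geq 1 + \sum_k (x_k - 1)(1 - c_{r_k}).
\]

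Verifying this algebraic claim is the main obstacle. My strategy has three parts. First, a short separate induction on rank shows $B_k \geq s_{r_k} - 1$, where $s_j := 1 + j/2 + \sum_{i<j} c_i$ (so that the defining recursion for $c_j$ rearranges to $(1 - c_j)(2^{j+1} + j) = s_j$); this pins down $x_k \geq X_{r_k} := 2^{r_k+1} + r_k$. Second, a partial-derivative computation shows that the difference between the two sides of the algebraic claim is non-decreasing in each $x_k$ throughout the feasible region: since $\partial/\partial x_k$ of the left side is $(1 - c_R)\prod_{j\neq k}x_j$ and of the right side is $1 - c_{r_k}$, it suffices that $(1 - c_R)\prod_{j\neq k} X_{r_j} \geq 1 - c_{r_k}$, which follows in every case from the two short monotonicity estimates $2c_R \leq 1 + c_{R-1}$ and $(1 - c_R)X_{R-1} \geq 1/2$ (both easy consequences of the recursion). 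It therefore suffices to verify the algebraic claim at the boundary $x_k = X_{r_k}$, where it becomes a direct computation using $s_R = (1 - c_R)(2^{R+1} + R)$ and $s_R - s_{R-1} = 1/2 + c_{R-1}$; the inequality turns out to be exactly tight only when $d = 2$ and both children are leaves (i.e., $R = 1$), which is precisely what pins down the values $c_j$.
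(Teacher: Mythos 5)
Your proposal is correct in outline and takes a genuinely different route from the paper. The paper's induction also boils down to exactly your statement (ii) for each component, in the form $1-c_{r_i}\ge\frac{1+\sum_{j\ge0}c_jm_j(T_i,v_i)}{\alpha(T_i,v_i)+1}$, but it proves that inequality directly rather than carrying it along in the induction: it lower-bounds $\alpha(T_i,v_i)\ge 2^{(n_i+1)/2}+r_i-1$ via the leaf-count bound for $\mathcal{T}^*_3$ and Lemma~\ref{sttreecount}, upper-bounds $\sum_j c_jm_j(T_i,v_i)$ by Chebyshov's sum inequality, and then shows the resulting ratio is decreasing in $n_i$ and equals $1-c_{r_i}$ at the extremal value $n_i=2r_i+1$. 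Your strengthened hypothesis replaces all of that machinery (no Lemma~\ref{sttreecount}, no leaf count, no Chebyshov) by the product formula $\alpha(T,v)=\prod_k(\alpha_k+1)$ together with the auxiliary bound $B\ge s_R-1$, which does pin the variables at $x_k\ge X_{r_k}=2^{r_k+1}+r_k$; your monotonicity step is sound, since in the case where the maximal rank occurs at some $j\ne k$ one uses $(1-c_R)X_{R-1}\ge\tfrac12\ge 1-c_{r_k}$, and in the case $r_k=R-1$ one uses $2c_R\le 1+c_{R-1}$, both of which do follow from the recursion. The one place you understate the work is the boundary check: it is not a single evaluation but an inequality over all $d\ge 2$ and all rank vectors, and along the family $d=2$, one leaf child and one child of rank $R-1$, the right-hand side equals $s_R$ exactly while the left-hand side is $s_R\bigl(2^{R+1}+2R-1\bigr)/\bigl(2^{R+1}+R\bigr)$, so the margin $s_R(R-1)/(2^{R+1}+R)$ tends to zero and no slack may be thrown away; a complete argument needs, say, a reduction showing that additional children increase the left side by more than the right, followed by an exact treatment of $d=2$ using the identities you cite. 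With that fleshed out the proof closes, and your equality case ($d=2$, both children leaves, $R=1$) is indeed the unique exact-equality configuration, consistent with the paper's extremal case $n_i=2r_i+1$.
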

\begin{proof}
First, we need some bounds on the $c_j$. We claim by induction that
\begin{equation*}
c_j\le 1-\frac{1+j}{2^{j+1}+j}\le 1
\end{equation*}
and that
\begin{equation*}
c_j\ge 1-\frac{1+\frac{3j}{2}}{2^{j+1}+j}\ge \frac{1}{2}.
\end{equation*}
We shall only use $\frac{1}{2}\le c_j \le 1$ (which is certainly true for j=0) in our induction step: 
if this is true for every $0\le j<k$ then
\begin{equation*}
c_k\le 1-\frac{1+k}{2^{k+1}+k}\le 1
\end{equation*}
and
\begin{equation*}
c_k\ge 1-\frac{1+\frac{3k}{2}}{2^{k+1}+k},
\end{equation*}
as required. It only remains to check that 
\begin{equation*}
1-\frac{1+\frac{3k}{2}}{2^{k+1}+k}\ge\frac{1}{2}
\end{equation*}
for every $k\ge 0$; this is true since $2^k\ge k+1$ so $2^{k+1}+k\ge 3k+2$. Thus, by induction, the bounds above hold for every $k\ge 0$.

Now we claim that these bounds imply that $c_j\to 1$ as $j \to \infty$ and that $c_{j+1}\ge c_j$ for every $j$. The first statement is a trivial
consequence of the lower bound. To prove the other, note that
\begin{eqnarray*}
\frac{1-c_j}{1-c_{j+1}} &\le&
 \left(\frac{j+1}{2^{j+1}+j}\right)\left(\frac{2^{j+2}+j+1}{\frac{3}{2}(j+1)+1}\right) \\
 &=& \frac{4j 2^j+4\times 2^j +j^2+2j+1}{3j 2^j+5\times 2^j+\frac{3}{2}j^2+\frac{5}{2}j}.
\end{eqnarray*}
Since 
\begin{eqnarray*} 
&&\left(4j 2^j+4\times 2^j +j^2+2j+1\right) - \left(3j 2^j+5\times 2^j+\tfrac{3}{2}j^2+\tfrac{5}{2}j\right) \\
&&\;\;\;\;\;\;\;\;\;\;\;\;\;\;\;\;\;\;\;\;\;\;\;=\;2^j(j-1)+1-\frac{1}{2}j^2-\frac{1}{2}j \\
&&\;\;\;\;\;\;\;\;\;\;\;\;\;\;\;\;\;\;\;\;\;\;\;\ge\; (j+1)(j-1)+1-j^2=0,
\end{eqnarray*}
it follows that $1-c_j\ge 1-c_{j+1}$, i.e. $c_j\le c_{j+1}$.

We are now ready to prove the main result, which we shall do by induction on $n$; it is trivial for $n=1$ 
(when $m_j(T,v)=0$ for every $j$). If $n>1$, let $v$ have $d$ neighbours $v_1,\ldots,v_d$ of ranks 
$r_1,\ldots,r_d$; let $T_i$ be the component of $T-v$ containing $v_i$, and write $n_i$ and $m_j(T_i,v_i)$ 
for the number of vertices and the number of vertices of rank $j$ respectively. Note that for each vertex 
in $T_i$ other than the root, the rank of that vertex in $T_i$ is equal to its rank in $T$. Therefore, the 
difference $m_j(T,v)-\sum_{i=1}^{d}m_j(T_i,v_i)$ is the number of the $v_i$ which have rank $j$ in $T$, and so 
\begin{equation*}
\sum_{j\ge 0}c_jm_j(T,v)=\sum_{i=1}^d\left(c_{r_i}+\sum_{j\ge 0}c_jm_j(T_i,v_i)\right).
\end{equation*}
Also,
\begin{eqnarray*}
\lambda(T,v) &=& 1+\sum_{i=1}^d\frac{\lambda(T_i,v_i)\alpha(T_i,v_i)}{\alpha(T_i,v_i)+1} \\
&\ge& 1+\sum_{i=1}^d\frac{\left(1+\sum_{j\ge 0}c_jm_j(T_i,v_i)\right)\alpha(T_i,v_i)}{\alpha(T_i,v_i)+1},
\end{eqnarray*}
by the induction hypothesis, and so it is sufficient to prove that for every $i$
\begin{equation*}
\frac{\left(1+\sum_{j\ge 0}c_jm_j(T_i,v_i)\right)\alpha(T_i,v_i)}{\alpha(T_i,v_i)+1} \ge 
\sum_{j\ge 0}c_jm_j(T_i,v_i)+c_{r_i},
\end{equation*}
or equivalently
\begin{equation*}
1-c_{r_i} \ge \frac{1+\sum_{j\ge 0}c_jm_j(T_i,v_i)}{\alpha(T_i,v_i)+1}.
\end{equation*}
If $r_i=0$ then the root has no children and $m_j(T_i,v_i)=0$ for each $j$; also $\alpha(T_i,v_i)=1$ so this 
is true (and the two sides are equal). If $r_i=1$ then all the children of the root are leaves; if it has 
$l$ children then $l\ge 2$ and so
\begin{eqnarray*}
\frac{1+\sum_{j\ge 0}c_jm_j(T_i,v_i)}{\alpha(T_i,v_i)+1}&=&\frac{1+\frac{l}{2}}{1+2^l} \\
&\le&\frac{2}{5}=1-c_1,
\end{eqnarray*}
as required.

From here onwards, then, we shall assume $r_i>1$. In that case the root is not a leaf and since 
$T_i \in \mathcal{T}^*_3$ it has at least $\frac{n_i+1}{2}$ leaves; it also contains the root and at least 
one vertex of rank $j$ for every $1\le j< r_i$. By Lemma \ref{sttreecount}, then, 
\begin{equation*}
\alpha(T_i,v_i)\ge 2^{\frac{n_i+1}{2}}+r_i-1. 
\end{equation*}

We now turn to bound $\sum_{j\ge 0}c_jm_j(T_i,v_i)$. Recall that, for each $j$, $c_{j+1}\ge c_j$ but 
$m_{j+1}(T_i,v_i)\le m_j(T_i,v_i)$. Also note that, since $v_i$ has rank $r_i$ in $T$, $m_j(T_i,v_i)\ge 1$ for 
each $j<r_i$ but $m_j(T_i,v_i)=0$ for each $j\ge r_i$. Now 
\begin{equation*}
\left(r_i-1\right)\sum_{j=1}^{r_i-1}c_jm_j(T_i,v_i)\le\left(\sum_{j=1}^{r_i-1}c_j\right)\left(\sum_{j=1}^{r_i-1}m_j(T_i,v_i)\right) 
\end{equation*}
by Chebyshov's sum inequality (see, e.g., \cite{HLP}). Using the fact that
\begin{equation*}
\sum_{j=0}^{r_i-1}m_j(T_i,v_i)=n_i-1,
\end{equation*}
and writing $l_i=m_0(T_i,v_i)$, we get
\begin{equation*}
\sum_{j\ge 0}c_jm_j(T_i,v_i)\le c_0l_i+\left(\frac{c_1+c_2+\cdots+c_{r_i-1}}{r_i-1}\right)\left(n_i-l_i-1\right).
\end{equation*}
Since $c_0=\frac{1}{2}\le c_i$, the right-hand side is a decreasing function of $l_i$. Remembering that $l_i\ge\frac{n_i+1}{2}$, then,
\begin{equation*}
\sum_{j\ge 0}c_jm_j(T_i,v_i)\le \frac{1}{2}\left(\frac{n_i+1}{2}\right)+\left(\frac{c_1+c_2+\cdots+c_{r_i-1}}{r_i-1}\right)\left(\frac{n_i-3}{2}\right),
\end{equation*}
and, combining our two bounds,
\begin{equation*}
\frac{1+\sum_{j\ge 0}c_jm_j(T_i,v_i)}{\alpha(T_i,v_i)+1} \le 
\frac{1+\frac{1}{2}\left(\frac{n_i+1}{2}\right)+\left(\frac{c_1+c_2+\cdots+c_{r_i-1}}{r_i-1}\right)\left(\frac{n_i-3}{2}\right)}
{2^{\frac{n_i+1}{2}}+r_i}.
\end{equation*}
Recall that $T_i$ has at most $\frac{n_i-1}{2}$ vertices which are not leaves, which include the root and 
at least one vertex of rank $j$ for each $1\le j<r_i$. Thus $n_i\ge 2r_i+1$.

Fix $r\ge 2$ and write
\begin{eqnarray*}
f_r(k)&=&1+\frac{1}{2}\left(\frac{k+1}{2}\right)+\left(\frac{c_1+c_2+\cdots+c_r}{r-1}\right)\left(\frac{k-3}{2}\right); \\
g_r(k)&=&2^{\frac{k+1}{2}}+r.
\end{eqnarray*}
If $k\ge 2r+1 \ge 5$, since $1\ge c_j\ge\frac{1}{2}$ for each $j$, $f_r(k)\ge\frac{k}{2}$ and $f_r(k+1)-f_r(k)\le\frac{3}{4}$, so
\begin{equation*}
\frac{f_r(k+1)-f_r(k)}{f_r(k)}\le\frac{3}{2k}\le\frac{3}{10}; 
\end{equation*}
also, $2^{\frac{k+1}{2}}\ge 2^{r+1}\ge 4r$ and so
\begin{eqnarray*}
\frac{g_r(k+1)-g_r(k)}{g_r(k)}&=&\frac{(\sqrt{2}-1)2^{\frac{k+1}{2}}}{2^{\frac{k+1}{2}}+r} \\
&\ge&\frac{(\sqrt{2}-1)}{1+\frac{1}{4}}>\frac{8}{25}.
\end{eqnarray*}
Consequently, $\frac{f_r(k+1)}{f_r(k)}<\frac{g_r(k+1)}{g_r(k)}$ and so $\frac{f_r(k)}{g_r(k)}$ is a 
decreasing function of $k$ and maximised at $k=2r+1$ (given the requirement that $k\ge 2r+1$). Therefore,
\begin{eqnarray*}
\frac{1+\sum_{j\ge 0}c_jm_j(T_i,v_i)}{\alpha(T_i,v_i)+1} &\le& 
\frac{1+\frac{1}{2}\left(\frac{n_i+1}{2}\right)+\left(\frac{c_1+c_2+\cdots+c_{r_i-1}}{r_i-1}\right)\left(\frac{n_i-3}{2}\right)}
{2^{\frac{n_i+1}{2}}+r_i} \\
&\le& \frac{1+\frac{1}{2}(r_i+1)+\left(\frac{c_1+c_2+\cdots+c_{r_i-1}}{r_i-1}\right)(r_i-1)}{2^{r_i+1}+r_i} \\
&=& \frac{1+\frac{r_i}{2}+c_0+c_1+c_2+\cdots+c_{r_i-1}}{2^{r_i+1}+r_i} =1-c_{r_i},
\end{eqnarray*}
as required.
\end{proof}
The sequence $(c_j)_{j\ge 0}$ begins 
$\frac{1}{2},\frac{3}{5},\frac{69}{100},\frac{1471}{1900},\frac{4819}{5700},\frac{70783}{78660},\ldots$, 
but we shall only need that it is increasing and tends to one. We may combine the above bound with that 
of Lemma \ref{twigub} to obtain necessary and sufficient conditions for a sequence of trees in 
$\mathcal{T}_3$ to have density tending to $\frac{3}{4}$.
\begin{theorem}\label{st34}
A sequence of distinct trees in $\mathcal{T}_3$ has average subtree density tending to $\frac{3}{4}$ if 
and only if the proportion of leaves tends to $\frac{1}{2}$ and the proportion of twigs tends to 0.
\end{theorem}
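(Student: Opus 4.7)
The necessity direction is essentially already in hand from the earlier upper bounds. Lemma~\ref{stleaf} together with the degree-sum argument used previously gives $\mu(T)<l/2+(n-l)$, hence $D(T)<1-l/(2n)$, so $D(T_i)\to 3/4$ combined with the minimum-leaf bound $l\ge(n+2)/2$ forces $l_i/n_i\to 1/2$. Lemma~\ref{twigub} gives $D(T)<3/4-2t/(5n)$, which forces $t_i/n_i\to 0$. So the real work is the sufficiency direction.

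Suppose then that $(T_i)$ has $n_i\to\infty$, $l_i/n_i\to 1/2$ and $t_i/n_i\to 0$, where $l_i,t_i$ denote the number of leaves and twigs of $T_i$. By Corollary~\ref{stapprox2}, for all sufficiently large $i$ there is an internal vertex $v_i$ with $|\mu(T_i)-\lambda(T_i,v_i)|<2$, so it suffices to show $\lambda(T_i,v_i)/n_i\to 3/4$. The upper bound $\lambda(T_i,v_i)\le\mu(T_i)+2<\tfrac{3}{4}n_i+2$ is immediate from Lemma~\ref{twigub}, and for the matching lower bound I would invoke the rank-based estimate $\lambda(T,v)\ge 1+\sum_{j\ge 0}c_jm_j(T,v)$ from the preceding theorem.

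The crucial asymptotic identification is that, with $v_i$ internal, rooting at $v_i$ makes every leaf of $T_i$ a rank-$0$ vertex, so $m_0(T_i,v_i)=l_i$; and every rank-$1$ vertex is a non-root vertex whose children are all leaves and whose parent is not a leaf, hence a twig of $T_i$, so $m_1(T_i,v_i)\le t_i$. Given $\varepsilon>0$, pick $J$ with $c_j>1-\varepsilon$ for all $j\ge J$ (possible as $c_j\to 1$). The monotonicity $m_{j+1}\le m_j$ then yields $m_j(T_i,v_i)\le m_1(T_i,v_i)\le t_i$ for every $j\ge 1$, so $\sum_{j=2}^{J-1}m_j(T_i,v_i)\le (J-2)t_i=o(n_i)$ for this fixed $J$. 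Using $\sum_{j\ge 0}m_j(T_i,v_i)=n_i-1$, I would deduce
\begin{equation*}
\sum_{j\ge 0}c_jm_j(T_i,v_i)\ge \tfrac12 l_i+(1-\varepsilon)(n_i-1-l_i-t_i)-o(n_i),
\end{equation*}
and substituting $l_i=n_i/2+o(n_i)$ and $t_i=o(n_i)$ gives $\lambda(T_i,v_i)\ge(3/4-\varepsilon/2)n_i-o(n_i)$. Letting $\varepsilon\to 0$ closes the gap to the upper bound.

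The main obstacle is controlling the intermediate ranks $2\le j<J$, where the coefficients $c_j$ are bounded away from $1$; the argument relies on the monotonicity $m_{j+1}\le m_j$ to show that these counts are collectively $O(Jt_i)=o(n_i)$ once $J$ is held fixed. A minor additional care is that the rank-$1$-versus-twig identification has a one-vertex discrepancy when $v_i$ itself happens to be a twig, but this is negligible for the asymptotic conclusion.
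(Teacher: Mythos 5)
Your proof is correct and takes essentially the same route as the paper: necessity from the upper bounds of Lemmas~\ref{stleaf} and~\ref{twigub}, and sufficiency by combining Corollary~\ref{stapprox2} with the rank-based lower bound $\lambda(T,v)\ge 1+\sum_{j\ge 0}c_jm_j(T,v)$, identifying rank-$0$ vertices with leaves, bounding $m_j$ for $j\ge 1$ by the twig count via the monotonicity $m_{j+1}\le m_j$, and splitting the sum at a fixed rank beyond which $c_j$ is close to $1$. The paper performs the same calculation (with its $k$ playing the role of your $J$), so no further comment is needed.
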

\begin{proof}
Write $(T_i)_{i\ge 0}$ for our sequence and let $T_i$ have $n_i$ vertices, $n_i\xi_i$ leaves and 
$n_i\varepsilon_i$ twigs. We know that $\xi_i>\frac{1}{2}$ for every $i$, since the number of leaves is 
at least $\frac{n_i+2}{2}$. By Lemmas \ref{stleaf} and \ref{twigub} we have the two upper bounds 
$\mu(T_i)<n_i-\frac{1}{2}n_i\xi_i$ and $\mu(T_i)<\frac{3}{4}n_i-\frac{2}{5}n_i\varepsilon_i$, so
$D(T_i)<1-\frac{1}{2}\xi_i$ and $D(T_i)<\frac{3}{4}-\frac{2}{5}\varepsilon_i$. If $\xi_i\not\to\frac{1}{2}$ 
then for some $\delta>0$, $\xi_i>\frac{1}{2}+\delta$, and so $D(T_i)<\frac{3}{4}-\frac{1}{2}\delta$, for 
infinitely many $i$; if $\varepsilon_i\not\to 0$ then similarly there is some $\delta>0$ for which 
$D(T_i)<\frac{3}{4}-\frac{2}{5}\delta$ for infinitely many $i$.

Conversely, suppose $\xi_i\to\frac{1}{2}$ and $\varepsilon_i\to 0$. Since there are only finitely many trees 
of each order, $n_i\to \infty$ and for $i$ sufficiently large there exists $v_i$ such that 
$|\mu(T_i)-\lambda(T_i,v_i)|<2$; again $|D(T_i)-\frac{1}{n_i}\lambda(T_i,v_i)|\to 0$. 

Consider $T_i$ rooted at $v_i$. There are at most $n_i\xi_i$ vertices of rank zero and $n_i\varepsilon_i$ 
of rank one. In fact, since each vertex of rank $j+1$ has a child of rank $j$, and these are distinct, 
there are at most $n_i\varepsilon_i$ vertices of rank $j$ for every $j\ge 1$. For any $\delta>0$ we may 
find $k$ such that $c_{k+1}>1-\delta$. Then, using the same notation as before,
\begin{eqnarray*}
\lambda(T_i,v_i) &\ge& 1+\sum_{j\ge 0}c_jm_j(T_i,v_i) \\
&\ge& \frac{1}{2}\left(\sum_{j=0}^{k}m_j(T_i,v_i)\right)+c_{k+1}\left(1+\sum_{j\ge k+1}m_j(T_i,v_i)\right) \\
&=& \frac{1}{2}\left(\sum_{j=0}^{k}m_j(T_i,v_i)\right)+c_{k+1}\left(n_i-\sum_{j=0}^{k}m_j(T_i,v_i)\right) \\
&>& \frac{1}{2}n_i(\xi_i+k\varepsilon_i)+(1-\delta)n_i(1-\xi_i-k\varepsilon_i)
\end{eqnarray*}
For $i$ sufficiently large, $\frac{1}{2}<\xi_i+k\varepsilon_i<\frac{1}{2}(1+\delta)$, so 
\begin{equation*}
\frac{\lambda(T_i,v_i)}{n_i}>\frac{1}{4}+\frac{1}{2}(1-\delta)^2>\frac{3}{4}-\delta.
\end{equation*}
Since $\frac{1}{n_i}\lambda(T_i,v_i)<\frac{3}{4}+\frac{2}{n_i}$, $\frac{1}{n_i}\lambda(T_i,v_i)\to\frac{3}{4}$, and so $D(T_i)\to\frac{3}{4}$.
\end{proof}

\section{Final Remarks}
Theorems \ref{st12} and \ref{st34} give a complete classification of sequences of series-reduced trees with
average subtree density tending to either extremal value. We remarked earlier that it is not necessary for 
the ratio of the diameter to the number of vertices to tend to $\frac{1}{2}$ in order for the average 
subtree density to tend to $\frac{3}{4}$; we conclude by giving a sequence with average subtree density 
tending to $\frac{3}{4}$ for which the ratio of the diameter to the number of vertices tends to 0.

For $k\ge 3$ and $r\ge 1$, we define the \textit{starfish} $\mathsf{Sf}_{k,r}$ with $k$ arms and radius $r$ 
as follows: take $k$ paths of length $r$ (i.e. having $r+1$ vertices) with a shared end-vertex but otherwise 
disjoint. Now to each vertex of degree two attach an additional leaf. 

$\mathsf{Sf}_{k,r}$ has $2k(r-1)+1$ vertices, $kr$ leaves and $k$ twigs; its diameter is $2r$. It follows
from Theorem \ref{st34} that a sequence $\left(\mathsf{Sf}_{k_i,r_i}\right)_{i\ge 0}$ of starfish has 
average subtree density tending to $\frac{3}{4}$ provided $r_i\to \infty$; if also $k_i\to \infty$ then the
ratio of the diameter to the number of vertices tends to 0.


\begin{thebibliography}{9}
\bibitem{BLL} F. Bergeron, P. Leroux and G. Labelle, Margaret Readdy (translator),
\textit{Combinatorial Species and Tree-Like Structures},
Cambridge University Press, 1997.

\bibitem{HP59} F. Harary and G. Prins,
The number of homeomorphically irreducible trees, and other species,
\textit{Acta Math.} \textbf{101} (1959) 141--162. 

\bibitem{HLP}G. H. Hardy, J. E. Littlewood, and G. P\'olya, 
\textit{Inequalities}, 2nd ed., Cambridge University Press, 1988.

\bibitem{Jam83} R. E. Jamison,
On the average number of nodes in a subtree of a tree,
\textit{J. Combin. Theory Ser. B} \textbf{35} (1983), 207--223.

\bibitem{MM83} A. Meir and J. W. Moon,
On subtrees of certain families of rooted trees, 
\textit{Ars Combin.} \textbf{16} (1983), 305--318.

\bibitem{VW10} A. Vince and H. Wang,
The average order of a subtree of a tree,
\textit{J. Combin. Theory Ser. B} \textbf{100} (2010), 161--170.
\end{thebibliography}
\end{document}